\let\chapter\undefined
\def\frk{\mathfrak}               
\def\Phi{{\frk n}}
\def\Phi{{\frk N}}
\def\HS{{\mathcal HS}}
\def\xb{{\mathbf x}}
\def\yb{{\mathbf y}}
\def\A{{\mathcal A}}
\def\H{{\mathcal H}}
\def\P{{\mathcal P}}
\def\C{{\mathcal C}}
\def\xb{{\mathbf x}}
\def\yb{{\mathbf y}}
\def\qed{\ifhmode\textqed\fi
      \ifmmode\ifinner\quad\qedsymbol\else\dispqed\fi\fi}
\def\textqed{\unskip\nobreak\penalty50
       \hskip2em\hbox{}\nobreak\hfil\qedsymbol
       \parfillskip=0pt \finalhyphendemerits=0}
\def\dispqed{\rlap{\qquad\qedsymbol}}
\def\opn#1#2{\def#1{\operatorname{#2}}} 
\opn\chara{char} \opn\length{\ell} \opn\pd{pd} \opn\rk{rk}
\opn\projdim{proj\,dim} \opn\injdim{inj\,dim} \opn\rank{rank}
\opn\depth{depth} \opn\grade{grade} \opn\height{height}
\opn\embdim{emb\,dim} \opn\codim{codim}
\opn\Tr{Tr} \opn\bigrank{big\,rank}
\opn\superheight{superheight}\opn\lcm{lcm}
\opn\trdeg{tr\,deg}
\opn\reg{reg} \opn\lreg{lreg} \opn\ini{in} \opn\lpd{lpd}
\opn\size{size} \opn\sdepth{sdepth}
\opn\link{link}\opn\fdepth{fdepth}\opn\lex{lex}
\opn\LM{LM}
\opn\LC{LC}
\opn\NF{NF}
\opn\Merge{Merge}
\opn\sgn{sgn}
\opn\div{div} \opn\Div{Div} \opn\cl{cl} \opn\Pic{Pic}
\opn\Prin{Prin}
\opn\op{op}
\opn\indeg{indeg} \opn\outdeg{outdeg}
\opn\red{red}
\opn\Spec{Spec} \opn\Supp{Supp} \opn\supp{supp} \opn\Sing{Sing}
\opn\Ass{Ass} \opn\Min{Min}\opn\Mon{Mon}
\opn\Ann{Ann} \opn\Rad{Rad} \opn\Soc{Soc}
 \opn\Ker{Ker} \opn\Coker{Coker} \opn\Am{Am}
\opn\Hom{Hom} \opn\Tor{Tor} \opn\Ext{Ext} \opn\End{End}
\opn\Aut{Aut} \opn\id{id}
\opn\nat{nat}
\opn\pff{pf}
\opn\Pf{Pf} \opn\GL{GL} \opn\SL{SL} \opn\mod{mod} \opn\ord{ord}
\opn\Gin{Gin} \opn\Hilb{Hilb}\opn\sort{sort}
\opn\span{span}
\opn\Image{Image}
\opn\aff{aff} \opn\con{conv} \opn\relint{relint} \opn\st{st}
\opn\lk{lk} \opn\cn{cn} \opn\core{core} \opn\vol{vol}
\opn\link{link} \opn\star{star}\opn\lex{lex}\opn\set{set}
\opn\dist{dist}
\opn\gr{gr}
\def\pot#1#2{#1[\kern-0.28ex[#2]\kern-0.28ex]}
\opn\dirlim{\underrightarrow{\lim}}
\opn\inivlim{\underleftarrow{\lim}}
\let\to=\rightarrow
\def\Implies{\ifmmode\Longrightarrow \else
        \unskip${}\Longrightarrow{}$\ignorespaces\fi}
\def\implies{\ifmmode\Rightarrow \else
        \unskip${}\Rightarrow{}$\ignorespaces\fi}
\def\iff{\ifmmode\Longleftrightarrow \else
        \unskip${}\Longleftrightarrow{}$\ignorespaces\fi}
\newtheorem{Theorem}{Theorem}[section]
\newtheorem{Lemma}[Theorem]{Lemma}
\newtheorem{Corollary}[Theorem]{Corollary}
\newtheorem{Proposition}[Theorem]{Proposition}
\theoremstyle{remark}
\newtheorem{Remark}[Theorem]{Remark}
\theoremstyle{definition}
\newtheorem{Example}[Theorem]{Example}
\newtheorem{Definition}[Theorem]{Definition}
\newtheorem*{Notation}{Notation}
\let\kappa=\varkappa
\def\qed{\ifhmode\textqed\fi
      \ifmmode\ifinner\quad\qedsymbol\else\dispqed\fi\fi}
\def\textqed{\unskip\nobreak\penalty50
       \hskip2em\hbox{}\nobreak\hfil\qedsymbol
       \parfillskip=0pt \finalhyphendemerits=0}
\def\dispqed{\rlap{\qquad\qedsymbol}}
\opn\dis{dis}
\def\pnt{{\raise0.5mm\hbox{\large\bf.}}}
\opn\Lex{Lex}
\opn\syz{{\rm syz}}
\opn\spoly{{\rm spoly}}
\opn\LM{{\rm LM}}
\opn\lm{{\rm lm}}
\opn\lcm{{\rm lcm}} \opn\A{\mathcal A}
\numberwithin{equation}{section}
\tikzstyle{Cwhite}=[scale = .6,circle, fill = white, minimum size=2.5mm]
\tikzstyle{Cgray}=[scale = .4,circle, fill = gray, minimum size=3mm]
\tikzstyle{Cblack2}=[scale = .4,circle, fill = black, minimum size=3mm]
\tikzstyle{Cblack}=[scale = .7,circle, fill = black, minimum size=3mm]
\tikzstyle{C0}=[scale = .9,circle, fill = black!0, inner sep = 0pt, minimum size=3mm]
\tikzstyle{C1}=[scale = .7,circle, fill = black!0, inner sep = 0pt, minimum size=3mm]
\tikzstyle{Cred}=[scale = .4,circle, fill = red, minimum size=3mm]
\tikzstyle{Cblue}=[scale = .4,circle, fill =blue, minimum size=3mm]
\begin{document}

\title{The algebraic method in tree percolation}
\author{Fatemeh Mohammadi} 
\address{Institut f\"ur Mathematik\\ Technische Universit\"at Berlin\\ 10623 Berlin, Germany}
\email{mohammad@math.tu-berlin.de}

\author{Eduardo S\'aenz-de-Cabez\'on}
\address{Department of Mathematics,  Universidad de La Rioja, Spain}
\email{eduardo.saenz-de-cabezon@unirioja.es}

\author{Henry P. Wynn}
\address{Department of Statistics, London School of Economics, UK}
\email{h.wynn@lse.ac.uk}


\maketitle
\begin{abstract}
We apply the methods of algebraic reliability to the study of percolation on trees. To a complete $k$-ary tree $T_{k,n}$ of depth $n$ we assign a monomial ideal $I_{k,n}$ on $\sum_{i=1}^n k^i$  variables and $k^n$ minimal monomial generators. We give explicit recursive formulae for the Betti numbers of $I_{k,n}$ and their Hilbert series, which allow us to study explicitly percolation on $T_{k,n}$. We study bounds on this percolation and study its asymptotical behavior with the mentioned commutative algebra techniques.
\end{abstract}
\section{Introduction}\label{sec:intro}

The study of monomial ideals has experienced a big development in the last couple of decades, not only from a theoretical point of view \cite{HerzogHibi} but also from the point of view of applications and algorithms \cite{BGS13}. Of particular interest are the relations between the algebra of monomial ideals and the combinatorics of graphs and networks \cite{Villarreal, morey&Villarreal, V13}. In relation with these lines of research, the authors have developed an algebraic theory of system reliability which can be applied to industrial, biological and communication systems, among others \cite{GW04,SW09,SW14,SW14b}. In this theory, a monomial ideal is associated to a coherent system and the study of the reliability of the system is performed by studying algebraic invariants of the ideal, such as the Hilbert series and Betti numbers. This algebraic approach to system reliability analysis is an example of enumerative methods for reliability evaluation. In particular, it is an improvement of the inclusion-exclusion method, which is the most general one for coherent systems \cite{GW04,SW09}.

A main difficulty and the first step  in the use of monomial ideals to study the reliability of coherent systems is the enumeration of the working and failure states of the system. This made the authors focus on several widely used and structured systems, like $k$-out-of-$n$ systems \cite{SW09}, series-parallel systems \cite{Sw11}, all-terminal networks \cite{FatemehFarbod,FatemehFarbod2,Fatemeh2014}, and the more general category of two-terminal networks \cite{SW10,Fatemeh2014}. The present paper follows this line extending the application of the algebraic approach to reliability analysis to a more general situation, which allows us to  introduce these techniques in  percolation theory, a branch of probability theory.

In the setting of two-terminal networks the situation is the following. Consider a network as a simple connected graph $G=(V,E)$, where $V$ is the set of vertices (nodes) and $E$ is the set of edges (connections). To have a two-terminal network, we select two special vertices in the graph, $s$ (source) and $t$ (target) and study the connections between $s$ and $t$ in the network. We consider that vertices are reliable but edges may fail. The network fails to communicate between $s$ and $t$ whenever there is a set of failing (removed) edges such that there is no path connecting $s$ and $t$ using only the remaining edges. Such a set of edges is called a {\em cut} in this context. On the other hand, a {\em path} is a set of working edges that connect $s$ and $t$. We say that the network is working whenever there is a path of working edges between $s$ and $t$. In the algebraic approach we consider a polynomial ring on $n$ variables, where $n$ is the number of edges of $G$, i.e. $n=\vert E\vert$. We associate a variable $x_e$, $e\in\{1,\dots,n\}$ to each edge $x_e$ in $E$. To a set of edges we associate the product of their corresponding variables. The main observation in the algebraic approach to network reliability is that the monomials corresponding to the set of cuts (respectively paths) of a network $G$ generate a monomial ideal, which we call the cut ideal of $G$, $J_G$ (respectively the path ideal of $G$, $I_G$). The evaluation of the (numerator of the) Hilbert series of either the cut ideal or the path ideal of $G$ using the probabilities of failure or function of each edge and their combinations, gives us the reliability of $G$. Furthermore, if we consider the form of the Hilbert series given by a free resolution of the ideal, we can obtain bounds for the reliability of $G$, which are tighter than the usual Bonferroni bounds \cite{SW09}.

The outline of the paper is the following. In \S\ref{sec:preliminary} we  generalize this setting to any situation in which a cut and a path are defined in opposition to each other, in an obvious way: a cut separates  a designated set of pairs of vertices and a path connects all such pairs. This allows us to study the problem of {\em all-terminal reliability} and {\em multi-source multi-terminal reliability}. These more general situations include the setting of {\em percolation theory}. In \S\ref{sec:tree} we apply this method to study percolation on complete trees. This is a new and relevant application of the algebraic method in reliability. We describe the path and cut ideals in this case and compute exact Hilbert series and Betti numbers.  We also give and compute recursive formulas for them. With these results at hand, we study in \S\ref{sec:bounds} path and cut bounds for percolation in trees, recover some classical results on critical values and study the asymptotic behaviour of percolation on trees and their corresponding Betti numbers.

\medskip

{\bf Acknowledgements.} The first author would like to thank J\"urgen Herzog for the helpful discussion about Lemma~\ref{initial2}. We would like to thank the referees for carefully reading our manuscript and their very helpful suggestions and remarks. The second and third authors were partially supported by Ministerio de Econom\'ia y Competitividad, Spain, under grant MTM2013-41775-P.

\section{Monomial Ideals, Betti  numbers and tight inclusion-exclusion bounds}\label{sec:preliminary}
\begin{Definition}\label{def:cut-path-ideals}
Given two disjoint nonempty subsets $A,B$ of $V(G)$ we define
\[
E(A,B) = \{e\in E(G): e\cap A\neq\emptyset\ \textit{and}\ \ e \cap B\neq\emptyset\}.
\]
For a nonempty subset $A$ of $V(G)$, $E(A,A^c)$ is called a {\em cut} of $G$.
A cut $E(A,A^c)$ is called {\em connected} if $G[A]$ and $G[A^c]$ are connected, where $G[A]$ denotes the induced subgraph of $G$ with the vertex set $A$. A cut which is minimal with respect to inclusion is called minimal.
\end{Definition}
Fix a vertex $q$ of $G$ as a source, and fix a subset $L\subseteq V(G)\backslash\{q\}$ as targets. Let $\mathcal{S}_{L,q}$ be the set containing all connected cuts $E(A, A^c)$ of $G$, with  $L\subset  A$ and $q\in  A^c$, and let $\mathcal{D}_{L,q}$ be the set containing all paths between $q$ and one of the vertices of $L$.

\medskip

Let $K$ be a field and let $S=K[\xb]$ be the polynomial ring in the $n=|E(G)|$ variables $\{x_e: e \in E(G)\}$.
We associate the monomial $m_C=\prod_{e\in C} x_e$ to each cut $C=E(A,A^c)$, and  the monomial $m_P=\prod_{e\in P} x_e$ to each path $P$.
We will be concerned with the following ideals in $R$:
\[
\C_{L,q}=\langle m_C:\  C\in \mathcal{S}_{L,q} \rangle\quad\text{and}\quad
\P_{L,q}=\langle m_P:\  P\in \mathcal{D}_{L,q} \rangle.
\]

\begin{Example}{\rm
Consider the two-terminal network $G$ depicted in Figure~\ref{fig:doublebridge}, known as the {\em double bridge} network. We have $E_G=\{ 12,13,14,23,25,34,35,45\}$. Consider vertex $q=1$ as the source and let $L=\{5\}$ be the set of targets.  Following the notation in \ref{def:cut-path-ideals} we obtain the following table of cuts
}
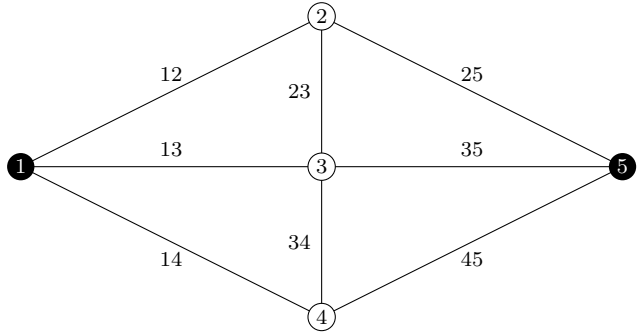
\begin{figure}[h]
\centering
\begin{tikzpicture}
\draw (0,0) -- (4,2) node[pos=0.5, above]{\tiny $12$};
\draw (0,0) -- (4,0) node[pos=0.5, above]{\tiny $13$};
\draw (0,0) -- (4,-2) node[pos=0.5, below]{\tiny $14$};
\draw (4,2) -- (4,0) node[pos=0.5, left]{\tiny $23$};
\draw (4,0) -- (4,-2) node[pos=0.5, left]{\tiny $34$};
\draw (4,2) -- (8,0) node[pos=0.5, above]{\tiny $25$};
\draw (4,0) -- (8,0) node[pos=0.5, above]{\tiny $35$};
\draw (4,-2) -- (8,0) node[pos=0.5, below]{\tiny $45$};
\fill[fill=black] (0,0) circle (1ex) node[text=white]{\tiny{1}};
\fill[fill=white,draw=black] (4,2) circle (1ex) node{\tiny{2}};
\fill[fill=white,draw=black](4,0) circle (1ex) node {\tiny{3}};
\fill[fill=white,draw=black] (4,-2) circle (1ex) node {\tiny{4}};
\fill[fill=black] (8,0) circle (1ex) node[text=white]{\tiny{5}};
\end{tikzpicture}
\caption{Double bridge network.}\label{fig:doublebridge}

\end{figure}
\vspace{-2mm}
\begin{center}
\begin{tabular}{llll}
$A$&$A^c$&$E(A,A^c)$\\
\hline
\{$5$\}&\{$1,2,3,4$\}&\{$24,35,45$\}\\
\{$2,5$\}&\{$1,3,4$\}&\{$12,23,35,45$\}\\
\{$3,5$\}&\{$1,2,4$\}&\{$13,23,34,25,45$\}\\
\{$4,5$\}&\{$1,2,3$\}&\{$14,25,34,35$\}\\
\{$2,3,5$\}&\{$1,4$\}&\{$12,13,34,45$\}\\
\{$2,4,5$\}&\{$1,3$\}&\{$12,23,14,34,35$\}\\
\{$3,4,5$\}&\{$1,2$\}&\{$13,23,14,25$\}\\
\{$2,3,4,5$\}&\{$1$\}&\{$12,13,14$\}\\
\end{tabular}
\end{center}
{\rm Hence } $\C_{L,q}=\langle x_{25}x_{35}x_{45},x_{12}x_{23}x_{35}x_{45},x_{13}x_{23}x_{25}x_{34}x_{45},x_{14}x_{25}x_{34}x_{35},x_{12}x_{13}x_{34}x_{45},$\\ $x_{12}x_{14}x_{23}x_{34}x_{35},x_{13}x_{14}x_{23}x_{25},x_{12}x_{13}x_{14}\rangle $ {\rm which is exactly the cut ideal of $G$ in the two-terminal setting.
}
\end{Example}
\vspace{-2mm}
\begin{Remark}\label{rem:dual}
{\rm
Alexander duality will be very useful in this context  (see \cite[Def.~5.20]{MillerSturmfels}). We recall that the squarefree Alexander dual of
$I=\langle \xb^{a_1},\ldots, \xb^{a_r} \rangle$ is the ideal $I^*= \underline{m}^{a_1}\cap\cdots\cap  \underline{m}^{a_r}$, where $\xb^{a_i}=\prod_{a_i^j\neq 0} x_j$ and  $\underline{m}^{a}=\langle x_j:a_i^j\neq 0\rangle$ for each vector $a_i=(a_i^1,\dots,a_i^n)\in \mathbb{N}^n$ which is a zero-one vector.
}
\end{Remark}
In this setting, we show that the path ideal is the Alexander dual of the cut ideal.
Let us give a brief reminder adapted to our setting. Let $\Sigma_G$ denote the associated simplicial complex to $\P_{L,q}$ on the vertices $\{x_e: e \in E(G)\}$.  The following result is a slight generalization of \cite[Prop. 8.1]{FatemehFarbod2}.
\begin{Proposition}\label{prop:primdecom}
The number of facets of $\Sigma_G$ is the same as the number of minimal cuts  of $G$.
For each cut $C$, the corresponding facet $\tau_C$ is
$
\tau_C = \{x_e: e \in E(G) \backslash C\}.
$
The minimal prime decomposition of $\P_{L,q}$ is
\[
\P_{L,q} = \bigcap_{C\in {\mathcal S}_{L,q}} \langle x_e: e \in C \rangle,
\]
the intersection being over all minimal cuts of $G$. In particular, $\P_{L,q} $ is the Alexander dual of  $\C_{L,q}$.
\end{Proposition}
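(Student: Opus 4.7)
The plan is to identify $\Sigma_G$ as the Stanley--Reisner simplicial complex of $\P_{L,q}$, so that $F\subseteq \{x_e:e\in E(G)\}$ is a face of $\Sigma_G$ if and only if $\prod_{e\in F}x_e\notin \P_{L,q}$, which (since $\P_{L,q}$ is a squarefree monomial ideal) is equivalent to saying that $F$ contains no path $P\in\mathcal D_{L,q}$. Everything then reduces to the combinatorial equivalence: \emph{$F$ contains no $q$-$L$ path if and only if $E(G)\setminus F$ is a $q$-$L$ cut}. The forward direction is immediate, since the existence of a path from $q$ to some vertex of $L$ inside the edge set $F$ would furnish a path of $\mathcal D_{L,q}$ contained in $F$; conversely, if $F$ contains no such path, then $q$ lies in a connected component of $(V(G),F)$ disjoint from $L$, and taking $A^c$ to be that component yields a connected cut $E(A,A^c)\in \mathcal S_{L,q}$ contained in $E(G)\setminus F$.

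From this equivalence, the facets of $\Sigma_G$ are precisely the complements of the minimal elements of $\mathcal S_{L,q}$: $F$ is maximal with the property that $E(G)\setminus F$ is a cut iff $E(G)\setminus F$ is a minimal cut. This gives the bijection $C\mapsto \tau_C=\{x_e:e\in E(G)\setminus C\}$, proving both the facet count and the explicit description of the facets.

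For the minimal prime decomposition, I would invoke the standard Stanley--Reisner identity
\[
I_{\Sigma_G}=\bigcap_{\tau \text{ facet of }\Sigma_G}\langle x_e:x_e\notin\tau\rangle,
\]
and substitute the facet description $\tau_C$ with complement $C$, obtaining
\[
\P_{L,q}=\bigcap_{C\in\mathcal S_{L,q}}\langle x_e:e\in C\rangle,
\]
the intersection running over the minimal cuts. Since each $\langle x_e:e\in C\rangle$ is a prime of height $|C|$ and no one contains another (by minimality of the cuts), this is the minimal prime decomposition. Finally, reading off the right-hand side and comparing with the definition of Alexander duality recalled in Remark~\ref{rem:dual} applied to the generators $\{\prod_{e\in C}x_e:C\in\mathcal S_{L,q}\}$ of $\C_{L,q}$ shows that $\P_{L,q}=(\C_{L,q})^{*}$.

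I do not expect a serious obstacle; the only point demanding care is the combinatorial equivalence between face complements and cuts, where one must check that the connected cut produced from a non-path edge set $F$ can indeed be chosen inside $\mathcal S_{L,q}$ (i.e.\ with $L\subseteq A$, $q\in A^c$, and both induced subgraphs connected), and that minimality of $C$ in $\mathcal S_{L,q}$ matches maximality of $\tau_C$ in $\Sigma_G$ without redundancy in the intersection.
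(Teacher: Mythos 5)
Your proposal is correct and takes essentially the same route as the paper: identify the faces of $\Sigma_G$ with path-free edge sets, match facets with complements of inclusion-minimal cuts, and conclude the prime decomposition and Alexander duality from the standard Stanley--Reisner correspondence (the paper invokes \cite[Thm.~1.7]{MillerSturmfels} at exactly this point, after constructing a cut from the edges missing from a non-divisible monomial much as you do via the component of $q$). The one caveat you flag yourself---that the cut $E(A,A^c)$ you produce need not have $G[A]$ connected, hence need not lie literally in $\mathcal{S}_{L,q}$ as defined---is an imprecision equally present in the paper's own statement and proof, which never verifies connectedness either and in effect works with all inclusion-minimal $q$--$L$ cuts.
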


\begin{proof}
The ideal $\P_{L,q}$ is generated by monomials $\prod_{e \in P}{x_e}$, where $P$ is a path from $q$ to one of the vertices in $L$.
First we show that for each cut $C$, the monomial $m_{\bar{C}}:=\prod_{e \in E(G)\backslash C}{x_e}$ does not belong to $\P_{L,q} $.
Clearly $m_{\bar{C}} \in \P_{L,q} $
if and only if $m_{\bar{C}}$ is divisible by one of the given generators $\prod_{e \in P}{x_e}$. But
\[
\prod_{e \in P}{x_e} \,\,\, \mid \, \prod_{e \in E(G)\backslash C}{x_e} \quad\quad \iff \quad\quad  P \subseteq (E(G)\backslash C) \ .
\]
However, it follows from the definition of cuts that $E(G)\backslash C$ does not contain any path from $q$ to any element of $L$.
This shows that $\tau_C =\{x_e: e \in E(G) \backslash C\}$ is a face in the simplicial complex $\Sigma_G$.
Next we show that $\tau_C$ must be a facet; for $f \in C$, because $C$ is a minimal cut of $G$, $G[C\backslash\{f\}]$ still has a path between $q$ and some element of $L$. 
Then the monomial $m_C \cdot x_f$ is divisible by $\prod_{e \in P}{x_e}$.

It remains to show that for any monomial $m = \prod_{e \in F}{x_e}$ that does not belong to $\P_{L,q}$ we have $F\subseteq (E(G) \backslash C)$ for some cut $C$.
To show this, we repeatedly use the fact that $m$ is not divisible by generators of the form $\prod_{e \in P}{x_e}$ for various $P$, and construct a cut $C$.
Note that if $\prod_{x \in F}{x_e}$ is not divisible by $\prod_{e\in P}{x_e}$ then there exists an $e \in P$ such that $e \not \in  F$. We consider the set consisting all such edges which is clearly a (not necessarily minimal) cut.
The proof now is complete by  \cite[Thm.~1.7]{MillerSturmfels}.
\end{proof}
\medskip


Let us now step into probability theory. In order to apply monomial algebra to network reliability, we assign a working probability to each of the connections (edges) of our network (graph). We shall consider that each edge $e$ operates with independent probability $p_e$ and fails with probability $q_e=1-p_e$. Our task is then to compute the probability $\mathcal{P}(p_e)$ that the system operates (at least one path) or fails (at least one cut) which is $1-\mathcal{P}(p_e)$. For these computations, we use the numerator of the Hilbert series of the path or cut ideals. Note that we could also consider 
dependent probabilities for each edge. This would need more complicated computations but not different methods.

The multigraded Hilbert series of $S/I$ for an ideal $I$ can be expressed in terms of the multidegrees of the modules in any multigraded resolution of $S/I$, as
$$\HS_I(x,t)=\frac{1+\sum_{i=1}^d (-1)^ix^i(\sum_{\alpha \in {
\mathbb{N}^n}} \gamma_{i,\alpha} t^{\alpha})}{\prod_{j=1}^n(1-t_i)},
$$
where the $\gamma_{i,\alpha}$ are the ranks of the multigraded modules in the resolution. If the resolution is minimal then
\begin{eqnarray}\label{hilb}
{\HS_I(x,t)}=\frac{1+\sum_{i=1}^d (-1)^i x^i(\sum_{\alpha \in {
\mathbb{N}^n}} \beta_{i,\alpha} t^{\alpha})}{\prod_{j=1}^n(1-t_i)},
\end{eqnarray}
where the
$\beta_{i,\alpha}$ depend only on $I$ and are known as the
\emph{multigraded Betti numbers} of $S/I$. Observe that the minimality of the resolution means that
\begin{equation*}
\beta_{i,\alpha} \leq \gamma_{i,\alpha}\quad \forall \alpha,i.
\end{equation*}
To simplify our notation, we set
\begin{eqnarray}
 \H_I(x,t)=-\sum_{i=1}^d (-1)^i x^i(\sum_{\alpha \in {\mathbb{N}^n}} \beta_{i,\alpha} t^{\alpha}), \
\end{eqnarray}
and we refer to this as the numerator of the Hilbert series of $I$ which can be seen as a special kind of inclusion-exclusion formulae for counting the monomials in the union of the
 ideals based on each individual minimal generator. By truncating at different homological degrees, or ``depths" $i$ we obtain
successive upper and lower bounds for the indicator function of this union. A key point here is that the bounds given  by the minimal resolution
are tighter, or at least as tight as, those given by the classical Bonferroni  (inclusion-exclusion) bounds. In the algebraic setting the Bonferroni bounds  correspond to the computation of the Hilbert series of $S/I$ using the Taylor resolution, see \cite{SW09} for a full explanation.

Observe that if we have that each edge $e$ of $G$ has a different operating probability $p_e$ then we need the multigraded version of the Hilbert series of $S/I$ to obtain the bounds and probability formulae for the reliability of the network. However, if all the edges operate (do not fail) independently with the same probability $p$, i.e., if $p_e=p$ for each edge $e$ of $G$, then we only need  the graded Betti numbers, to obtain the bounds. Each of the graded Betti numbers sums up all the multigraded ones of the same total degree $j$ for each homological degree $i$, i.e. $\beta_{i,j}(I)=\sum_{\deg(\mu)=j}\beta_{i,\mu}(I)$.

A good way to relate the information of the Hilbert series and Betti numbers is the use of generating functions. We first fix some notation and then express the bounds in terms of the Betti numbers $\beta_{i,j}$. First, consider a variable $x$ as a placeholder for the homological degree $i$. Thus  we define the graded Betti number generating function of an ideal $I$ as
$$G_I(x,t) = \sum_{i,j} \beta_{i,j}(S/I) x^i t^j\ \mbox{ for }\ i,j>0.$$
The numerator of the graded Hilbert series can now be expressed as
\begin{eqnarray}\label{eq:numerator}
\H_I(x,t) = -G_I(-x,t).
\end{eqnarray}
\begin{Notation}
In our setting, to simplify the notation $\H(x,t)$ denotes the numerator of the Hilbert series of the path ideal $\P_{L,q}$ and
$\tilde{\H}(x,t)$ denotes the numerator of the Hilbert series of the cut ideal $\C_{L,q}$. Similarly, we denote $G(x,t)$,
$\tilde{G}(x,t)$,  $\beta_{i,j}$ and $\tilde{\beta}_{i,j}$ for ${G}_{\C_{L,q}}(x,t)$, $G_{\P_{L,q}}(x,t)$, $\beta_{i,j}(S/\P_{L,q})$ and $\beta_{i,j}(S/\C_{L,q})$.
\end{Notation}
\begin{Remark}\label{rem:perc}
Using the path ideal, the path probability (percolation) is  given by
\begin{eqnarray}\label{eq:percolation}
\mathcal{P}(p)  =  \H(1,p).
\end{eqnarray}
Observe that this expression comes from the fact that the numerator of the Hilbert series represents the full inclusion inclusion-exclusion of the orthants with ``corner''
at the minimal paths and then replacing $t$ by $p$ translates this into the reliability function: see \cite{SW09}. 
\end{Remark}

On the other hand, as mentioned, the cut probability is given by
\begin{eqnarray}\label{eq:percolation2}
1-\mathcal{P}(p) = 1-\H(1,p).
\end{eqnarray}
Put briefly, the full Hilbert series gives the full operating set.
We also have the cut ideal and use tilde for the cut quantities. So we have for the cut ideal:
$$\tilde{G}(x,t) = \sum_{i,j} \tilde{\beta}_{i,j }x^i t^j\ \mbox{ for }\ {i,j}>0,$$
$$\tilde{\H}(x,t) =   -\tilde{G}(-x,t).$$
The probability of system failure is
$$\tilde{\mathcal{P}}(q) = \tilde{\H}(1,q).$$
So combining the formula we have two ways of expressing the probability $\mathcal{P}(p)$:
\begin{equation}\label{eq:dual}
\mathcal{P}(p)  = 1- \tilde{\mathcal{P}}(1-p).
\end{equation}
This is a manifestation of the Alexander duality in terms of probability.

To complete the notation, consider any power series in $x$, say $f(x) = \sum_{i \geq 0}c_i x^i$ and let
$T_m(f(x)) = \sum_{i=0}^m c_i x^i$, be the truncated version at $i=m$. Thus, powers of the ``dummy" variable $x$ can be used to pick out
the depth at which we truncate to get bounds:
$$  T_{2r+2}(\H(x,p))\vline_{\; x=1}  \leq \mathcal{P}(p) \leq T_{2r+1}(\H(x,p))\vline_{\; x=1}, r = 0,1, \ldots $$
$$  T_{2r+2}(\tilde{\H}(x,q))\vline_{\; x=1}  \leq \tilde{\mathcal{P}}(q) \leq T_{2r+1}(\tilde{\H}(x,q))\vline_{\; x=1}, r = 0,1, \ldots .$$
As we will see in detail in the \S\ref{sec:bounds}, the path bounds are accurate for small $p$, and the cut bounds for small $q$, or $p=1-q$ close to $1$.

\section{Tree percolation}\label{sec:tree}
Let us apply these techniques to a prominent example, namely percolation in complete $k$-ary trees.
A complete $k$-ary tree $T_{k,n}$ of depth $n$, is a tree with $n$ levels in which each node (except the leaves) has exactly $k$ children. Each edge between nodes is called a bond. 
See Figure~\ref{fig:k2} for $k=2$ and $n=3$. We are interested in  standard tree bond percolation on  $T_{k,n}$. Each bond has an independent  probability $p$ of being operative. A {\em percolation} is a path of bonds from the first generation (root) to the last (a leaf).  If we consider the unique minimal ways to connect each of the leaves with the root as minimal connecting events, then we want to find the probability of the union of events that contain at least one minimal connecting path from the root to a leaf. We will use an algebraic approach to solve this problem.

\begin{figure}[h!]  \begin{center}

\begin{tikzpicture}  [scale = .22, very thick = 15mm]

 \node (n0) at (-25,11) [Cgray] {};
\node (m1) at (-29,9)  [Cwhite] {$x_1 $};
\node (m2) at (-21,9)  [Cwhite] {$x_2$};
\node (n1) at (-31,6)  [Cgray] {};
\node (n2) at (-19,6)  [Cgray] {};
\node (m3) at (-33.5,4)  [Cwhite] {$x_3$};
\node (m4) at (-28.5,4)  [Cwhite] {$x_4$};
\node (n3) at (-34,1)  [Cgray] {};
\node (n4) at (-28,1)  [Cgray] {};
\node (m3) at (-21.5,4)  [Cwhite] {$x_5$};
\node (m4) at (-16.5,4)  [Cwhite] {$x_6$};
\node (n5) at (-22,1)  [Cgray] {};
\node (n6) at (-16,1)  [Cgray] {};
\node (n7) at (-35.8,-4)  [Cgray] {};
\node (n8) at (-32.2,-4)  [Cgray] {};
\node (m7) at (-35.8,-1)  [Cwhite] {$x_7$};
\node (m8) at (-32.2,-1)  [Cwhite] {$x_8$};
\node (m9) at (-29.8,-1)  [Cwhite] {$x_9$};
\node (m10) at (-26.1,-1)  [Cwhite] {$x_{10}$};
\node (n9) at (-29.8,-4)  [Cgray] {};
\node (n10) at (-26.2,-4)  [Cgray] {};
\node (n11) at (-23.8,-4)  [Cgray] {};
\node (n12) at (-20.2,-4)  [Cgray] {};
\node (n13) at (-17.8,-4)  [Cgray] {};
\node (n14) at (-14.2,-4)  [Cgray] {};
\node (m7) at (-36,-1)  [Cwhite] {$x_7$};
\node (m8) at (-32,-1)  [Cwhite] {$x_8$};
\node (m11) at (-23.6,-1)  [Cwhite] {$x_{11}$};
\node (m12) at (-20.2,-1)  [Cwhite] {$x_{12}$};
\node (m13) at (-17.7,-1)  [Cwhite] {$x_{13}$};
\node (m14) at (-14,-1)  [Cwhite] {$x_{14}$};

 \foreach \from/\to in {n0/n1,n0/n2,n1/n3,n1/n4,n2/n5,n2/n6, n3/n7,n3/n8,n4/n9,n4/n10, n5/n11,n5/n12,n6/n13,n6/n14}
 \draw[][->] (\from) -- (\to);
\end{tikzpicture} \end{center}

\caption{$T_{2,3}$}
\label{fig:k2}
\end{figure}

Much of the theory of percolation is about critical values. For the bond percolation on $T_{k,n}$, there is a critical value, denoted $p_c$,
such that for $0 \leq p \leq p_c$, as $n \rightarrow \infty$, the probability of a percolation tends to zero, whereas
for $p_c \leq p \leq 1$ the probability tends to a positive probability. This critical value is known to be $p_c = \frac{1}{k}$.
This is a classical result and is often covered in the theory of branching processes, where the positive probability is referred to as the probability of survival of a branching process, in which in every generation each individual has $k$ offsprings. For the general theory
of percolation see \cite{G99} and for work on  percolation on trees see \cite{lyons1990} and Chapter 5 of \cite{lyons2005probability}.

For the study of critical values in our algebraic setting, we use recurrence relationships for the Hilbert series giving  $\H_{k,n}(x,t)$ in terms of $\H_{k,n-1}(x,t)$. In the final section we use these recurrences to study the behaviour of the bounds, as $n \rightarrow \infty$, including a notion of asymptotic Betti numbers.

\subsection{The path ideal of $T_{k,n}$}\label{subsec:path}
Let us consider $T_{k,n}$ as a rooted graph with the edge set $E$ (the edges are oriented  going away from the root) as  in Figure~\ref{fig:k2}. We label each node with increasing integers, starting from the root, which has label $0$ and within the same level from left to right.
Each edge will be labelled with $x_i$, where
$i$ is the head of the edge, i.e., the edge is directed toward $i$.

\medskip

Let $K$ be a field and let $S=K[\xb]$ be the polynomial ring in the $m=|E|=\sum_{j=1}^n k^j$ variables $\{x_e: e \in E\}$.
The path ideal $I_{k,n}$ is the monomial ideal in $R$ generated by the monomials $x_{i_1}\cdots x_{i_n}$ where $0,i_1,\ldots,i_n$ is a unique path from the root to the leaf $i_n$. The ideal $I_{k,n}$ has then $k^n$ minimal generators (one for each leaf).

\begin{Notation}
For fixed integers $k$ and $n$, we fix $0$ as a source, and the set of leaves as targets instead of $q$ and $L$ from \S\ref{sec:preliminary}. The ideals $I_{k,n}$ and $J_{k,n}$ denote the corresponding path ideal $\P_{L,q}$, and the corresponding cut ideal $\C_{L,q}$ from \S\ref{sec:preliminary}.
\end{Notation}

\begin{Remark}{\rm
Let $R = K[\yb]$ be the polynomial ring over a field $K$ on $|V(T_{k,n})|$ variables. The path ideal of length $n$ associated to $T_{k,n}$ is the monomial ideal $I_{n+1} \subset S$ generated by monomials $y_{i_0}y_{i_1}\cdots y_{i_n}$ where ${i_0},{i_1},\ldots, {i_n}$ is a path in $T_{k,n}$. Such ideals are studied in \cite{Tina,Tuyl}. Note that if $n$ is the depth of the tree, then $y_{i_0}$ is the variable corresponding to the root.  In particular $I_{n+1}$ is isomorphic to our ideal $I_{k,n}$ 
 under the induced isomorphism
\[
\varphi \colon \mathcal{G}(I_{n+1}) \rightarrow \mathcal{G}(I_{k,n})\ { \rm with }\ y_{i_0}y_{i_1}\cdots y_{i_n}\mapsto x_{i_1}\cdots x_{i_n}
\]
where $\mathcal{G}(I)$ denotes the generating set of $I$, and $x_{i_\ell}$ is the variable corresponding to the edge between the vertices $y_{i_{\ell-1}}$ and $y_{i_\ell}$.
}
\end{Remark}

\begin{Lemma}
Let $T_{k,n}$ be a $k$-ary tree and $I_{k,n}$ its path ideal. Then we have that
\begin{itemize}
\item[(i)] the Taylor resolution of $I_{k,n}$ is minimal for all $k$ and $n$,

\item[(ii)] the Betti numbers are given by  $\beta_i(I_{k,n}) ={k^n \choose i}$,

\item[(iii)] The length of the resolution, i.e., the projective dimension of $I_{k,n}$ is ${k^n}$.
\end{itemize}
\end{Lemma}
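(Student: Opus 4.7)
The plan is to verify the standard minimality criterion for the Taylor complex directly from the combinatorics of $T_{k,n}$, and then to read off (ii) and (iii) as formal consequences. Recall that for a monomial ideal generated by $m_1,\dots,m_r$, the Taylor complex is a minimal free resolution of $S/I$ if and only if, for every $T\subseteq\{1,\dots,r\}$ and every $j\in T$, one has $\lcm\{m_i:i\in T\setminus\{j\}\}\neq \lcm\{m_i:i\in T\}$; equivalently, adding $m_j$ to $T\setminus\{j\}$ must always introduce a new variable into the $\lcm$. Geometrically, this says that every differential in the Taylor complex has all nonzero entries in $\mathfrak{m}$.

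The key combinatorial observation is that, with the labelling fixed in \S\ref{subsec:path}, for each leaf $\ell$ of $T_{k,n}$ the variable $x_\ell$ corresponds to the unique edge entering $\ell$. Because $T_{k,n}$ is a tree, this edge lies on the unique root-to-leaf path ending at $\ell$ and on no other root-to-leaf path. Hence $x_\ell$ divides the generator $m_\ell$ (the product of edges on the path from the root to $\ell$) and no other minimal generator of $I_{k,n}$. Consequently, for any $T\subseteq\{1,\dots,k^n\}$ and any $j\in T$ with corresponding leaf $\ell$, the variable $x_\ell$ appears in $\lcm\{m_i:i\in T\}$ but is absent from $\lcm\{m_i:i\in T\setminus\{j\}\}$. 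The criterion therefore holds for every $T$ and every $j$, which proves (i).

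Assuming (i), parts (ii) and (iii) are immediate. Since the Taylor resolution is minimal, its free modules are literally those of the minimal free resolution of $S/I_{k,n}$. The $i$-th Taylor module has rank $\binom{r}{i}$, where $r=k^n$ is the number of leaves, hence of minimal generators of $I_{k,n}$; summing multigraded Betti numbers of total degree $j$ gives $\beta_i(I_{k,n})=\binom{k^n}{i}$, which is (ii). The Taylor complex has its top nonzero module in homological degree $r=k^n$, so minimality forces $\pd(S/I_{k,n})=k^n$, giving (iii). The only substantive step is really the uniqueness of the leaf-edge variable $x_\ell$ that certifies each minimality check; everything else is a formal unpacking of the definition of the Taylor resolution, so there is no serious obstacle to overcome.
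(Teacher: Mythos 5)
Your proposal is correct and follows essentially the same route as the paper: both hinge on the observation that each minimal generator contains the variable of the edge into its leaf, which divides no other generator, so the least common multiples relevant to the Taylor differential always differ and the Taylor complex is minimal; (ii) and (iii) are then read off from the ranks and length of the Taylor complex. The only cosmetic difference is that you verify the standard criterion $\lcm\{m_i : i\in T\setminus\{j\}\}\neq\lcm\{m_i : i\in T\}$ for each $T$ and $j$, while the paper states the (equivalent in effect, slightly stronger) fact that the lcms of any two distinct subsets of generators are distinct.
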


\begin{proof} Each minimal generator $m_{\alpha}$ of $I_{k,n}$ has a variable that appears only in it, namely the one corresponding to the edge $ij$ where $j$ is the leaf in $m_\alpha$. Hence the monomials corresponding to the least common multiple of any two different sets of minimal generators are different, hence the multidegrees of the generators of the Taylor resolution of $I_{k,n}$ are all different, and hence the Taylor resolution of $I_{k,n}$ is minimal. The Betti numbers of $I_{k,n}$ are then the ranks of the modules in the Taylor resolution.
\end{proof}

\subsection{Path formulae}\label{sec:path}
Here we read the resolution of $I_{k,n}$ as a tensor product of the resolutions of ideals arising from $I_{k,n-1}$ to obtain the generating function and a recursive formula for the ideal's Betti numbers (see Appendix~\ref{sec:tensor} for some basic facts about tensor products of chain complexes).
\begin{Theorem}\label{prop:proof}
The total Betti numbers of $I_{k,n}$ are $\beta_i(I_{k,n}) ={k^n \choose i}$ and the graded Betti numbers $\beta_{i,j}$ can be determined recursively as:
\[
\beta_{i-1,j}(I_{k,n})=\beta_{i,j}(S/I_{k,n})=\sum_{s=1}^k\sum_{(i_1,\ldots,i_s)\in A_{i,s}\atop (j_1,\cdots,j_s)\in B_{j,s}}
{k \choose s}
\beta_{i_1,j_1}(S/I_{k,n-1})\cdots \beta_{i_s,j_s}(S/I_{k,n-1})\ ,
\]
where
\[
A_{i,s}=\{(i_1,\ldots,i_s):\ i_1+\cdots+i_s=i,\ i_1,\ldots,i_s>0\}\ {\rm and}
\]
\[
\ B_{j,s}=\{(j_1,\ldots,j_s):\ j_1+\cdots+j_s=j-s\}.
\]
\end{Theorem}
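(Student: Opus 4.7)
The plan is to decompose $I_{k,n}$ according to the $k$ principal subtrees of the root and then use the Künneth/tensor-product formula hinted at in the statement. Let $y_1,\ldots,y_k$ denote the edge variables joining the root of $T_{k,n}$ to its $k$ children, and for each $\ell=1,\ldots,k$ let $S_\ell$ be the polynomial ring in the edge variables of the $\ell$-th subtree (which is itself a copy of $T_{k,n-1}$), with $I_\ell\subset S_\ell$ the corresponding path ideal. Then the generators of $I_{k,n}$ split into $k$ disjoint groups indexed by $\ell$, and each group consists of $y_\ell\cdot m$ for $m$ a minimal generator of $I_\ell$. Hence, writing $S_\ell'=S_\ell[y_\ell]$, we have the presentation
\[
I_{k,n}=\sum_{\ell=1}^{k} y_\ell I_\ell \subset S=\bigotimes_{\ell=1}^{k} S_\ell',
\]
with pairwise disjoint variable supports.

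The first step is the tensor-product isomorphism $S/I_{k,n}\cong \bigotimes_{\ell=1}^{k}\bigl(S_\ell'/y_\ell I_\ell\bigr)$, which is immediate from disjointness of supports. Because each $S_\ell'/y_\ell I_\ell$ is a $K$-flat module and the factors live in disjoint variable sets, the tensor product of their minimal multigraded free resolutions is a minimal multigraded free resolution of $S/I_{k,n}$ (this is the only place where one really uses the machinery from Appendix~\ref{sec:tensor}; the minimality check is the point to be careful about, but it follows from disjointness since no cancellation across tensor factors can occur). This yields the multiplicative formula
\[
\beta_{i,j}(S/I_{k,n})=\sum_{\substack{i_1+\cdots+i_k=i\\ j_1+\cdots+j_k=j}}\ \prod_{\ell=1}^{k}\beta_{i_\ell,j_\ell}\bigl(S_\ell'/y_\ell I_\ell\bigr).
\]

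The second step is to rewrite each factor in terms of $\beta(S/I_{k,n-1})$. Since $y_\ell$ is a variable that does not appear in any generator of $I_\ell$, multiplication by $y_\ell$ is injective and the minimal resolution of $S_\ell'/y_\ell I_\ell$ is obtained from that of $S_\ell/I_\ell$ by tensoring with $K[y_\ell]$ and shifting every positive-homological degree summand by one in internal degree. Concretely, $\beta_{0,0}(S_\ell'/y_\ell I_\ell)=1$ and, for $i_\ell\ge 1$, $\beta_{i_\ell,j_\ell}(S_\ell'/y_\ell I_\ell)=\beta_{i_\ell,j_\ell-1}(S_\ell/I_\ell)$; equivalently, in the Taylor complex of $y_\ell I_\ell$ every subset of generators contributes an lcm multiplied by $y_\ell$, so degrees uniformly shift by one.

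The third step is bookkeeping. In the product, separate the trivial factors ($i_\ell=0$, which forces $j_\ell=0$ and contributes $1$) from the non-trivial ones. If exactly $s$ of the indices $\ell$ have $i_\ell\ge 1$, there are $\binom{k}{s}$ ways to pick which children are ``active'', and, since all subtrees are isomorphic, the resulting product depends only on the multiset of non-trivial $(i_\ell,j_\ell)$. Writing $j_\ell=j_\ell'+1$ for each active $\ell$ converts $\sum j_\ell=j$ into $\sum j_\ell'=j-s$, giving exactly the index sets $A_{i,s}$ and $B_{j,s}$ in the statement. Collecting everything produces the claimed recursion. The statement about total Betti numbers $\beta_i(I_{k,n})=\binom{k^n}{i}$ was already recorded in the preceding lemma. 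I expect the main obstacle to be cleanly justifying the minimality of the tensor product of minimal resolutions across disjoint variable sets, but once this standard fact is cited the rest is combinatorial bookkeeping.
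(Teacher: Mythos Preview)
Your proposal is correct and follows essentially the same approach as the paper: decompose $I_{k,n}=\sum_{\ell=1}^{k} y_\ell I_{k,n-1}^{(\ell)}$ along the $k$ subtrees of the root, invoke the tensor-product result for ideals in disjoint variable sets (Lemma~\ref{initial}) to identify the minimal free resolution of $S/I_{k,n}$ with the tensor product of those of the $S_\ell'/y_\ell I_\ell$, and then use the degree shift $\beta_{i,j}(S_\ell'/y_\ell I_\ell)=\beta_{i,j-1}(S_\ell/I_\ell)$ for $i\ge 1$ to obtain the recursion. Your explicit bookkeeping (separating the $s$ ``active'' factors to produce the $\binom{k}{s}$ and the shift $j\mapsto j-s$ defining $B_{j,s}$) spells out what the paper absorbs into its citation of Lemma~\ref{initial}(iii), and the minimality concern you flag is exactly what that lemma handles.
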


\begin{proof} Assume that $x_{1},x_{2},\ldots,x_{k}$ are the variables corresponding to the edges of tree connected to the root. Then
\[
I_{k,n}=x_1I_{k,n-1}^{(1)}+x_2I_{k,n-1}^{(2)}+\cdots+x_kI_{k,n-1}^{(k)},
\]
where each $I_{k,n-1}^{(i)}$ is a tree ideal associated to a $k$-ary tree of depth $n-1$.
Note that their corresponding trees are disjoint, so the ideals $I_{k,n-1}^{(i)}$ live in disjoint polynomial rings. Therefore, the resolution of $I_{k,n}$ is the tensor product of the resolutions of the ideals $x_iI_{k,n-1}^{(i)}$.
On the other hand, $\beta_{i,j}(x_iI_{k,n-1}^{(i)})=\beta_{i,j+1}(I_{k,n-1})$ for all $k$ and $n$. Thus  the statement is an immediate consequence of Lemma \ref{initial}(iii).\end{proof}


\begin{Remark}\label{rem:gen}
Let us denote by $G_{k,n}=\sum_{i,j}\beta_{ij}(I_{k,n})x^it^j$  the generating function for the Betti numbers of the ideal $I_{k,n}$. We also denote the numerator of the graded Hilbert series of $I_{k,n}$ by $\H_{k,n}$. Note that by Remark~\ref{eq:numerator} we have 
$$\H_{k,n}(x,t) = -G_{k,n}(-x,t).$$
\end{Remark}
We recall that the ideal $I_{k,1}=\langle x_1,x_2, \ldots,x_k\rangle$ is generated by $k$ variables. Thus $\beta_{i,j}={k\choose i}$ if $i=j$, and it is zero otherwise. Therefore
\[
G_{k,1}(x,t)=(1+tx)^k-1.
\]
From Theorem~\ref{prop:proof} and the above argument we obtain the following compact result.
\begin{Theorem} \label{thm:gen}
The generating function for the Betti numbers of $I_{k,n}$ for all $k$ and $n$, is equal to
\begin{equation}\label{eq:gen-path}
G_{k,n}(x,t) = (1 + tG_{k,n-1}(x,t))^k-1.
\end{equation}
\end{Theorem}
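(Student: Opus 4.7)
The plan is to derive the recurrence directly from the decomposition $I_{k,n} = x_1 I_{k,n-1}^{(1)} + \cdots + x_k I_{k,n-1}^{(k)}$ already exploited in the proof of Theorem~\ref{prop:proof}, where the $k$ summands live in pairwise disjoint sets of variables. Equivalently one can repackage the combinatorial recurrence of Theorem~\ref{prop:proof} as a generating function identity; both routes give the same answer, but the tensor product viewpoint makes the binomial structure visible.

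First I would handle the ``degree shift" step. If $J$ is a monomial ideal in a polynomial ring $R$ and $x$ is a new variable, the minimal free resolution of $xJ$ over $R[x]$ is obtained from the minimal free resolution of $J$ over $R$ by a single homogeneous twist by $1$. Hence $\beta_{i,j}(R[x]/xJ) = \beta_{i,j-1}(R/J)$ for $i\geq 1$, which at the level of generating functions says $G_{xJ}(x,t) = tG_J(x,t)$. Applied to each summand $x_i I_{k,n-1}^{(i)}$ this contributes a factor $tG_{k,n-1}(x,t)$.

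Second I would invoke the standard fact that for ideals in pairwise disjoint sets of variables the minimal free resolution of the quotient by their sum is the tensor product of the individual resolutions, so at the level of graded Poincar\'e series $P = 1+G$ one has the product rule $P_{S/(J_1+\cdots+J_k)} = \prod_\ell P_{S/J_\ell}$. In our case all $k$ factors are equal to $1 + tG_{k,n-1}(x,t)$, so the product collapses to $(1+tG_{k,n-1}(x,t))^k$; subtracting the constant term $1$ to pass back from $P$ to $G$ yields the desired identity. The base case is the Koszul computation $G_{k,1}(x,t) = (1+xt)^k - 1$ recorded immediately before the theorem.

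The only technical point requiring attention is preservation of minimality at each step: one must be sure that neither the shift $J\mapsto xJ$ nor the tensor product over disjoint variables introduces cancellations that would make the generating functions disagree with those of the \emph{minimal} resolution. The shift is immediate, and the tensor product of two minimal complexes over disjoint variable sets is again minimal because the differentials have no constant term. Combined with the minimality of the Taylor resolution of $I_{k,n}$ established in the previous lemma, this ensures that the graded Betti numbers combine multiplicatively as needed, and the recurrence follows.
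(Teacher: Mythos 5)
Your proposal is correct and follows essentially the same route as the paper: the decomposition $I_{k,n}=x_1I_{k,n-1}^{(1)}+\cdots+x_kI_{k,n-1}^{(k)}$ into ideals on disjoint variables, the degree shift for $x_iI_{k,n-1}^{(i)}$, and the tensor-product (multiplicativity of Poincar\'e series) lemma, with the base case $G_{k,1}(x,t)=(1+tx)^k-1$. The paper simply packages this as ``Theorem~\ref{prop:proof} plus the base case,'' while you phrase the same argument directly at the level of $1+G$; this is a presentational difference only (and your shift $\beta_{i,j}(S/x_iJ)=\beta_{i,j-1}(S/J)$ states the index shift in the correct direction).
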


\subsection{Cut ideal and cut formulae}\label{sec:cut}

 As explained in Proposition~\ref{prop:primdecom} the cut ideal is the Alexander dual ideal of  the path ideal $I_{k,n}$. We consider the following problem: Given a probability $p_i$ for each edge $i$ in $T_{k,n}$ to be operative, we want to find the probability of {\em disconnecting} the root with all leaves of graph. If we consider all minimal possible ways to disconnect the leaves with the root as minimal connecting events, then what we want to find is the probability of the union of events that does not contain any  path connecting the root to a leaf.  As before, we consider $p_i=p$ for all $i$. 

Here we read the ideal $J_{k,n}$ as the Alexander dual of the tree ideal $I_{k,n}$ studied in \S\ref{sec:path} to obtain the generating function and a recursive formula for its Hilbert series.

For all $k$ and $n$, the generating function for the Betti numbers of the ideal $J_{k,n}$ is denoted by $\tilde{G}_{k,n}$.
We recall that the ideal $I_{k,1}=\langle x_1,x_2, \ldots,x_k\rangle$ is generated by $k$ variables, and its dual is $J_{k,1}=\langle x_1x_2\cdots x_k\rangle$. Thus $\beta_{0,0}(S/J_{k,1})=1$, $\beta_{1,k}(S/J_{k,1})=1$, and it is zero otherwise. Therefore
$\tilde{G}_{k,1}(x,t)=t^kx$.

\begin{Theorem} The generating function of the Betti numbers of $J_{k,n}$  for all $k$ and all $n>1$, is equal to
\begin{equation}\label{eq:gen-cut}
\tilde{G}_{k,n}(x,t) = x^{-(k-1)}\big((1+tx)(1 + \tilde{G}_{k,n-1}(x,t))-1\big)^k.
\end{equation}
\end{Theorem}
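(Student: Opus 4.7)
The plan is to mirror the strategy of Theorem~\ref{prop:proof}: decompose $J_{k,n}$ into a product of ideals in pairwise disjoint variable sets, compute the Betti numbers of each factor, and assemble the answer via the tensor-product-of-resolutions principle of Appendix~\ref{sec:tensor}. An important difference from the path case is that the Taylor resolution of $J_{k,n}$ is \emph{not} minimal (already for $J_{2,2}$), so one cannot simply count subsets of generators. Label the root edges $x_{e_1},\ldots,x_{e_k}$, let $T^{(i)}$ be the subtree rooted below $x_{e_i}$ with polynomial ring $S^{(i)}$ and cut ideal $J_{k,n-1}^{(i)}$, and set $\tilde{S}_i = K[x_{e_i}]\otimes_K S^{(i)}$. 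Every minimal cut of $T_{k,n}$ must sever the root from every leaf, which forces it to contain, for each $i$, \emph{either} the top edge $x_{e_i}$ \emph{or} (exclusively) a minimal cut of $T^{(i)}$. Consequently the minimal generators of $J_{k,n}$ are exactly the products $g_1\cdots g_k$ with $g_i\in\{x_{e_i}\}\cup \mathcal{G}(J_{k,n-1}^{(i)})$; setting $L_i := (x_{e_i}) + J_{k,n-1}^{(i)}\subseteq \tilde{S}_i$, the pairwise disjointness of the variable supports turns this into the factorization $J_{k,n} = L_1 L_2\cdots L_k$.

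Next I would compute the Betti generating function of each $\tilde{S}_i/L_i$. Since $x_{e_i}$ is a nonzerodivisor on $\tilde{S}_i/(J_{k,n-1}^{(i)}\tilde{S}_i)$, the minimal $\tilde{S}_i$-free resolution of $\tilde{S}_i/L_i$ is the tensor product of the minimal $S^{(i)}$-free resolution of $S^{(i)}/J_{k,n-1}^{(i)}$ with the Koszul complex on $x_{e_i}$. This gives $\beta_{p,q}(\tilde{S}_i/L_i) = \beta_{p,q}(S^{(i)}/J_{k,n-1}^{(i)}) + \beta_{p-1,q-1}(S^{(i)}/J_{k,n-1}^{(i)})$, and taking the generating function over all $(p,q)\ge 0$ (including the entry $\beta_{0,0}=1$) amounts to multiplication by $(1+tx)$; hence the full Betti generating function of $\tilde{S}_i/L_i$ equals $(1+tx)\bigl(1+\tilde{G}_{k,n-1}(x,t)\bigr)$.

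Finally, I would invoke the principle of Appendix~\ref{sec:tensor}: for ideals living in pairwise disjoint variable sets, the $K$-tensor product of the minimal $\tilde{S}_i$-free resolutions of the ideals $L_i$ is a minimal $S$-free resolution of $L_1 L_2\cdots L_k = J_{k,n}$ (freeness and exactness by K\"unneth over $K$; minimality because every differential entry lies in some $\mathfrak{m}_{\tilde{S}_i}\subseteq\mathfrak{m}_S$). Passing from $\tilde{S}_i/L_i$ to $L_i$ shifts the homological grading down by one, i.e.\ it subtracts $1$ from the generating function and multiplies by $x^{-1}$; taking the $k$-fold product (disjoint variables multiply) and then shifting back up to pass from $J_{k,n}$ to $S/J_{k,n}$ yields $1+\tilde{G}_{k,n}(x,t) = 1 + x^{-(k-1)}\bigl((1+tx)(1+\tilde{G}_{k,n-1}(x,t))-1\bigr)^k$, from which the claimed recurrence follows by subtracting $1$. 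The principal obstacle is justifying the tensor step—that the $K$-tensor of minimal $\tilde{S}_i$-resolutions of the \emph{ideals} $L_i$ really is a minimal $S$-resolution of the ideal product—but once disjointness of variable supports is in place, exactness follows from K\"unneth, minimality is automatic, and the identification $L_1\otimes_K\cdots\otimes_K L_k \cong L_1\cdots L_k$ as $S$-modules is a standard consequence of disjoint supports.
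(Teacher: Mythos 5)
Your proof is correct and follows essentially the same route as the paper: factor $J_{k,n}=\bigl(\langle x_{1}\rangle + J_{k,n-1}^{(1)}\bigr)\cdots\bigl(\langle x_{k}\rangle + J_{k,n-1}^{(k)}\bigr)$ into ideals on pairwise disjoint variables and take the tensor product of their minimal free resolutions (Lemma~\ref{initial2}), with the same $x^{-(k-1)}$ degree-shift bookkeeping. The only cosmetic differences are that you justify the factorization by analyzing minimal cuts directly rather than by Alexander-dualizing $I_{k,n}=x_1I_{k,n-1}^{(1)}+\cdots+x_kI_{k,n-1}^{(k)}$, and you compute each factor's Betti generating function $(1+tx)\bigl(1+\tilde{G}_{k,n-1}(x,t)\bigr)$ via a Koszul mapping cone instead of the disjoint-variable Hilbert-series product formula of Lemma~\ref{initial}.
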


\begin{proof}
Assume that $x_{1},x_{2},\ldots,x_{k}$ are the variables corresponding to the edges of tree connected to the root, and $I_{k,n-1}^{(i)}$ is a tree ideal associated to a $k$-ary tree of depth $n-1$. We denote $J_{k,n-1}^{(i)}$ for the Alexander dual of the ideal $I_{k,n-1}^{(i)}$. Thus the Alexander dual of the ideal $x_iI_{k,n-1}^{(i)}$ is equal to $\langle x_i\rangle+J_{k,n-1}^{(i)}$, because $x_i$ doesn't appear in the support of any monomial from the generating set of $I_{k,n-1}^{(i)}$. Thus the numerator of the graded Hilbert series (\ref{hilb}) of  $\langle x_i\rangle+J_{k,n-1}^{(i)}$ is equal to
\[
(1-tx)(1+\tilde{G}_{k,n-1}(-x,t)).
\]
On the other hand, $J_{k,n}$ can be written as the multiplication of the ideals $\langle x_i\rangle+J_{k,n-1}^{(i)}$ living in the polynomial rings on disjoint variables:
\[
J_{k,n}=\big(\langle x_1\rangle+J_{k,n-1}^{(1)}\big) \cdots \big(\langle x_k\rangle+J_{k,n-1}^{(k)}\big).
\]
Now applying Lemma~\ref{initial2}(iii), then the same argument as Theorem~\ref{thm:gen} implies that the minimal free resolution of the ideal $J_{k,n}$ is the tensor product of that of $x_i+J_{k,n-1}^{(i)}$ and so we have
\[
\HS_{J_{k,n}}(x,t)=\frac{1+(-x)^{-(k-1)} \big((1-tx)(1+\tilde{G}_{k,n-1}(-x,t))-1\big)^k}{(1-t)^d}. \quad
\]
Therefore
\begin{eqnarray}\label{hilbcut}
\H_{J_{k,n}}(x,t)=-(-x)^{-(k-1)} \big((1-tx)(1+\tilde{G}_{k,n-1}(-x,t))-1\big)^k
\end{eqnarray}
and
\[
\tilde{G}(x,n)=x^{-(k-1)} \big((1+tx)(1+\tilde{G}_{k,n-1}(x,t))-1\big)^k.\qed
\]

\end{proof}

\section{Bounds and  critical values}\label{sec:bounds}
Using the same notation as in Remark~\ref{rem:gen}  and applying Remark~\ref{rem:perc}
the percolation probability of the path ideal $I_{k,n}$ is then given by
$$\mathcal{P}_{k,n}(p) = \H_{k,n}(1,p).$$
Similarly we have
$$\tilde{\mathcal{P}}_{k,n}(q) = \tilde{\H}_{k,n}(1,q).$$
From \eqref{eq:dual} and Theorem~\ref{thm:gen} we have the iterative formula
\begin{equation}
\H_{k,n}(x,t) =1- \big(1- t(\H_{k,n-1}(x,t))\big)^k.
\end{equation}
This gives the formula for $\mathcal{P}_{k,n}(p)$:
\begin{equation}
\mathcal{P}_{k,n}(p) = 1 - \left(1-p\ \mathcal{P}_{k,n-1}(p)\right)^k,
\end{equation}
which is well known from the theory of branching process \cite{harris2002}. We will have in mind the classical asymptotic form for $\mathcal{P}_{k,n}(p)$. As $n \rightarrow \infty$, and  for fixed $k$, $\mathcal{P}_{k,n}(p)$ converges to the function:
$$\mathcal{P}_{k,\infty}(p) = \max(0,1-u),$$
where $u$ is the solution of
$$u=(1-p(1-u))^k.$$
The value $p_c = \frac{1}{k}$ is the maximum value of $p$ for which $\mathcal{P}_{k,\infty}(p) = 0$.

For the bounds given in  \S\ref{sec:preliminary} we write the path and cut bounds, respectively as
$$B_{k,n,m}(p) = T_m (\H(x,p)) \vline_{\;x=1},$$
$$C_{k,n,m}(q) = T_m (\tilde{\H}(x,q)) \vline_{\;x=1}.$$

We now discuss how the bounds for percolation based on $B_{k,n,m}$ and $ C_{k,n,m}$
behave. As a brief guide to a quite technical section the following is an informal list of the main features found by the authors
\begin{enumerate}
\item The path bounds $B_{k,n,m}$ are accurate as $p \rightarrow 0$.
\item The cut bound,  $C_{k,n,m}$, are accurate as $p \rightarrow 1$ ($q=1-p \rightarrow 0$).
\item The path bounds display critical behaviour at the critical value $p_c = \frac{1}{k}$ in that they diverge away from the true probability, as $n$ increases and can only be controlled by taking higher depth $m$ .
\item The cut bounds reveal a new type of critical value $p_k^* = 1- q^*_k> p_c$.
\end{enumerate}

All these results are  consequences of having the iterative formulae (\ref{eq:gen-path}) and (\ref{eq:gen-cut}).  It should also be noted that the path bounds are easier to handle than the cut bounds, which is a consequence of the Taylor resolution (standard inclusion-exclusion) being the minimal free resolution in the path case, which is not true in the cut case. By working on the first few bounds,  we can obtain exact formula and limits in some cases.

\begin{Example}
Figure~\ref{fig:bounds} gives an example combining the path and the cut bounds for $k=2$, $n=4$ and depth at $m= 3,4$. Observe that together with the curve showing the true probability of percolation there are four curves plotted together in this figure, two on the left side of the figure i.e. probability $p$ closer to $0$ and two on the right side i.e. probability $p$ closer to $1$. To cope with the divergence near the critical value the upper and lower bounds are truncated respectively at 1 and 0.  The upper bounds are in green: $m=3$ for the path bound on the left and $m=4$ for the cut bound on the right. The lower bounds are in blue: $m=4$ for the path bound on the left, and $m=3$ for the cut bound on the right. The central red curve is the true probability of percolation.
 
\end{Example}

We begin with some formulae for the path case. Multiplying the bounds by a truncated version of the product
$(1-kp)(1-k^2p^2)(1-k^3p^3) \cdots$
leads to tractable formulae. Interestingly, the inverse of this infinite product is the generating function for integer partitions.
For the path bounds we have the following, for $n=1,2,3$:
\begin{eqnarray*}
B_{k,n,1}(p)& = & p^n k^n \\
B_{k,n,2}(p)(1-kp) & =  & p^n k^n  \left( 1- \frac{1}{2}(3k-1)p +\frac{1}{2} (k-1)k^n p^{n+1} \right)               \\
B_{k,n,3}(p)(1-kp)(1-k^2p^2) & = & p^n k^n (1- \frac{1}{2}(3k-1)p -\frac{1}{6}(k+1)(5k-2)p^2\\
& & +\frac{1}{6}k (11k^2-6k+1)p^3+ \frac{1}{2} k^n(k-1)p^{n+1}   \\
& & -\frac{1}{2} k^n(k-1)^2 p^{n+2}    -\frac{1}{2} k^{n+1}(2k-1)(k-1)p^{n+3} \\
& & + \frac{1}{6}k^{2n}(2k-1)(k-1)p^{2n+2} \\
& & + \frac{1}{6}k^{2n+1}(k-1)(k-2)p^{2n+3} )\ .
  \end{eqnarray*}
The general formula, whose proof is omitted, is
$$B_{k,n,m} \prod_{i=1}^{m-1}(1-k^ip^i)  = p^nk^n\left(Q_{k,p,m}(k,p) + O(p^{n+1})\right),$$
where $Q_{k,p,m}(k,p)$ is a polynomial in $p$, the degree of which depends only on $k$ and $m$. This gives some asymptotics
as $n \rightarrow \infty$. To aid this we set $p = \frac{R}{k}$, having in mind that $\frac{1}{k}$ is the critical value.

After a little algebra we have the following formulae
\begin{eqnarray*}
B_{k,n,1}(p) & = & R^n \\
B_{k,n,2}(p) & = & R^n \; \left( 1-     \frac{1}{2}\;\frac{k-1}{k} \; \frac{R}{1-R} \right) + \mbox{O}(R^{2n+1}) \\
B_{k,n,3}(p) & = & R^n \;\left (1- \frac{1}{6} \frac{k-1}{k^2}\; \frac{R(-5R^2k+R^2-Rk+2R+3k)}{(1-R)(1-R^2)}  \right) + \mbox{O}(R^{2n+1})
\end{eqnarray*}

For fixed $R < 1$ the first bound  $B_{k,n,1} \rightarrow 0$, as expected. It is instructive to let $k \rightarrow \infty$, again while keeping $R<1$ fixed. Combining the first two bounds ($m=2,3$) we have asymptotically, we obtain
$$ R^n \left(  1- \frac{1}{2} \frac{R}{(1-R)} \right) \leq B_{\infty, n, \infty} \leq R^n  \left(  1- \frac{1}{6}\frac{R(3-R -5R^2)}{(1-R)(1-R^2)}  \right).$$
The bounds agree to order $R^{n+1}$, the left bound reaches zero at $R=\frac{1}{2}$ and the bounds diverge to $\pm \infty$ as $R \rightarrow 1$. It has a pole at $R=1$,
 for all the bounds except the first from which we claim that $p=p_c = \frac{1}{k}$ is a critical value for the path bounds, albeit buried under a basic  $R^n$ convergence rate for $R<1$.

\medskip

Now let us consider the cut bounds. We start with a basic form of the iteration for the cut generating function  from \eqref{hilbcut}:
\begin{eqnarray}
g(x,t,k,u) = (-1)^k\frac{1}{x^{k-1}} \left( (1-tx)(1-u) -1 \right)^k.
\end{eqnarray}
Then  the successive values of $\H_{k,n}(x,t)$ are given by the recurrence relation:
$$\H_{k,n}(x,t) = g(x,t,k,\H_{k,n-1}(x,t)).$$
Note that
$$\H_{k,1}(x,t) = t^k x.$$
The main difficulty is that although we have recurrence for $\H_{k,n+1}(x,y)$ there is not in general such a nice
formula for the truncated version which is given by extracting the Taylor expansion in $x$ up to degree $m$.

However, there is one simple case,
namely the first upper cut bound, i.e. $m=2$, which we denote by $C_{k,n,2}(q)$ . We have for all $n \geq1$
\begin{eqnarray} \label{C_k,n,2}
C_{k,n,2}(q) = \left(C_{k,n-1,2}(q)+q \right)^k.
\end{eqnarray}
At any fixed $k,q$ the value $C_{k,n,2}(q)$ increases with $n$.  There is a critical  value $q=q^*_{k,n,2}$. For any $q$ above this value $C_{k,n,2}(q) \rightarrow \infty$ as $n \rightarrow \infty$. In the interval $0 \leq q \leq  q^*_{k,n,2}$, $C_{k,n,m}$  tends from below to the solution to
$$z = (z+q)^k.$$
We can solve this explicitly for $q$, giving $q
 = z^{\frac{1}{k}} -1$. The critical values of $q$ and $z$ are found,
by solving $\frac{dz}{dq} = 0$ and we obtain:
$$q^*_k = \frac{k-1}{k^2} \;k^{\frac{k-2}{k-1}},$$
which plays a key role in the cut theory.

\begin{figure}[tbp]
\centering
 {\includegraphics[width=90mm]{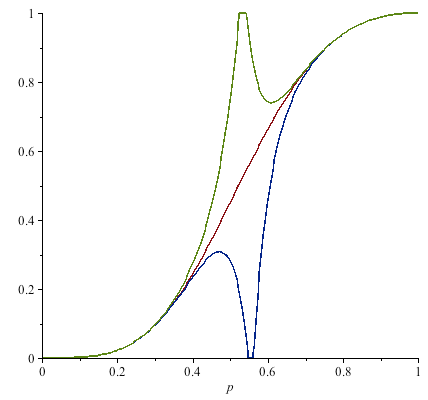}}
\caption{Upper and lower path and cut bounds for $k=2$, $n=4$, and $m=3,4$. }\label{fig:bounds}
\end{figure}

\begin{Example}
For $n=2,3$ and $k=2$ we obtain:
$$
\begin{array}{rcl}
\H_{2,2}(x,t) & = & t^2(t+1)^2x-2t^4(t+1)x^2+t^6x^3 \\
\H_{2,3}(x,t) & =  & t^2(t^3+2t^2+t+1)^2x-2t^4(t^3+2t^2+t+1)(t+1)(3t+1)x^2+ \\
 &  & t^6(15t^4+40t^3+36t^2+18t+5)x^3-2t^8(10t^3+20t^2+12t+3)x^4 + \\
& & t^{10}(15t^2+20t+6)x^5-2t^{12}(3t+2)x^6+t^{14}x^7
\end{array}
$$
\end{Example}
\begin{Remark}
Note that for all $i$ and $j$, we can read off the Betti numbers $\beta_{i,j}(S/J_{k,n})$ as the coefficients
of $x^i t^j$ in the polynomial $\H_{k,n}(x,t)$.
\end{Remark}

The limiting behaviour of  $C_{k,n,m}(q)$ as $n\rightarrow \infty$ will be considered below. 

\begin{Lemma}
For fixed $q$ and $k=m=2$, the lower asymptotic bound of $C_{k,n,m}(q)$ as $n\rightarrow \infty$ is
$$C_{2,\infty,2}(q) = \frac{1}{2} - q^2+\frac{1}{2} \frac{6q^2+2q-1}{\sqrt{1-4q}}.$$
\end{Lemma}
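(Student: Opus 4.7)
The plan is to exploit the recurrence \eqref{hilbcut} for $\tilde H_{k,n}(x,t)$ specialised to $k=2$, $t=q$. With the base case $\tilde H_{2,1}(x,q)=q^2 x$, this reads
\[
\tilde H_{2,n}(x,q)=\tfrac{1}{x}\bigl((1-qx)(1-\tilde H_{2,n-1}(x,q))-1\bigr)^2.
\]
Since $C_{2,n,2}(q)=T_2(\tilde H_{2,n}(x,q))|_{x=1}$ only depends on the $x^1$ and $x^2$ coefficients of $\tilde H_{2,n}$ (the $x^0$ coefficient being zero), I would write $\tilde H_{2,n}(x,q)=a_n(q)\,x+b_n(q)\,x^2+O(x^3)$, so that $C_{2,n,2}(q)=a_n(q)+b_n(q)$. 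A direct mod-$x^3$ expansion of the recursion yields the coupled scalar recursions
\[
a_n=(a_{n-1}+q)^2,\qquad b_n=-2(a_{n-1}+q)(qa_{n-1}-b_{n-1}),
\]
with $a_1=q^2$ and $b_1=0$, so the task reduces to identifying $\lim_n(a_n+b_n)$.

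For $q\in[0,\tfrac14)$ the increasing map $z\mapsto(z+q)^2$ has two fixed points $a_\pm=\tfrac12(1-2q\pm\sqrt{1-4q})$, and starting from $a_1=q^2<a_-$ the sequence $a_n$ rises monotonically to $a^*:=a_-$ at geometric rate $|f'(a^*)|=2\sqrt{a^*}=1-\sqrt{1-4q}<1$. The joint map $(a,b)\mapsto\bigl((a+q)^2,\,-2(a+q)(qa-b)\bigr)$ has Jacobian at $(a^*,b^*)$ that is lower triangular with both eigenvalues equal to $2(a^*+q)=2\sqrt{a^*}$, hence a contraction, so $(a_n,b_n)$ converges jointly to $(a^*,b^*)$. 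Solving the resulting linear equation $b^*=-2(a^*+q)(qa^*-b^*)$ gives
\[
b^*=\frac{2q\,a^*(a^*+q)}{2(a^*+q)-1}.
\]

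The remainder is algebra. The fixed-point equation $a^*=(a^*+q)^2$ gives $\sqrt{a^*}=a^*+q=\tfrac12(1-\sqrt{1-4q})$, so $2(a^*+q)-1=-\sqrt{1-4q}$, and substituting into the formula for $b^*$ yields
\[
a^*+b^*\;=\;a^*\cdot\frac{(1+q)\sqrt{1-4q}-q}{\sqrt{1-4q}}.
\]
Plugging in $a^*=\tfrac12(1-2q-\sqrt{1-4q})$ and expanding
\[
(1-2q-\sqrt{1-4q})\bigl[(1+q)\sqrt{1-4q}-q\bigr]=(1-2q^2)\sqrt{1-4q}-(1-2q-6q^2)
\]
(using $(\sqrt{1-4q})^2=1-4q$ to absorb the cross terms), then dividing by $2\sqrt{1-4q}$, produces the target
\[
\tfrac12-q^2+\frac{6q^2+2q-1}{2\sqrt{1-4q}}.
\]

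The one non-routine point is the joint convergence of $(a_n,b_n)$: because $b_n$ genuinely depends on $a_{n-1}$, one cannot freeze $a$ at $a^*$ prematurely, and a naive one-variable-at-a-time analysis breaks down. The triangular-Jacobian contraction argument above is what legitimises passing to the simultaneous limit on $[0,\tfrac14)$; everything after that is mechanical simplification anchored on the two identities $\sqrt{a^*}=a^*+q$ and $2\sqrt{a^*}-1=-\sqrt{1-4q}$.
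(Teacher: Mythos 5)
Your proof is correct and arrives at the stated formula, but by a route organized differently from the paper's. The paper passes to the limit first: it solves the fixed-point equation $u=g(x,t,2,u)$ in closed form (a quadratic in $u$), takes the relevant root $u^*(x,t)$, Taylor-expands it in powers of $x$, truncates at $x^2$ and sets $x=1$, $t=q$; the exchange of the limit $n\to\infty$ with truncation and evaluation at $x=1$ is treated only informally there (deferred to the uniform-convergence remark around Theorem~\ref{prop:lim}). You truncate first: writing $\tilde{\H}_{2,n}(x,q)=a_n(q)x+b_n(q)x^2+O(x^3)$ you obtain the exact recursions $a_n=(a_{n-1}+q)^2$ and $b_n=-2(a_{n-1}+q)(qa_{n-1}-b_{n-1})$, prove convergence for $0\le q<\tfrac14$ (monotone iteration for $a_n$, then the contraction factor $2(a^*+q)=1-\sqrt{1-4q}<1$ for $b_n$), and solve the limiting equations; your simplification via $\sqrt{a^*}=a^*+q$ and $2(a^*+q)-1=-\sqrt{1-4q}$ is correct and yields exactly $\tfrac12-q^2+\tfrac12\,\frac{6q^2+2q-1}{\sqrt{1-4q}}$. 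What your route buys is a self-contained justification that $C_{2,n,2}(q)\to C_{2,\infty,2}(q)$ on $[0,\tfrac14)$, which the paper leaves implicit; what the paper's route buys is the closed form $u^*(x,t)$, which is reused later for the asymptotic Betti numbers (Theorem~\ref{thm:cutBetti}). Two minor remarks: the convergence of $b_n$ is cleanest if phrased as an affine nonautonomous recursion $b_n=c_{n-1}b_{n-1}+d_{n-1}$ with $c_{n-1}=2(a_{n-1}+q)\le 1-\sqrt{1-4q}<1$ along the entire orbit (since $a_n\le a^*$ for all $n$), so no appeal to a merely local contraction is needed; and your computation in fact corresponds to the \emph{smallest} root of the quadratic, the one with $-\sqrt{1-4t+4t^2x}$, whose truncated expansion at $x=1$, $t=q$ reproduces your $a^*+b^*$ and confirms the lemma as stated (the root and expansion displayed in the paper's proof carry sign slips, while its final formula agrees with yours).
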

\begin{proof}
We consider the solution in $u$, of $u-g(x,t,k,u)=0$. 
As $k=2$, we use the solution
\begin{equation}\
u^*(x,t) = \frac{1}{2}\frac{(2t^2x-2t+1+\sqrt{-4t+1+4t^2x})x}{(1-xt)^2}\ .
\end{equation}
If we expand in powers of $x$ we obtain:
$$
\left(\frac{1}{2}\sqrt{1-4t}\right)x -\left(\frac{t}{2}\sqrt{1-4t} +t^2 -t \sqrt{1-4t}\right)x^2
 + \mbox{O}(x^3).
$$
The cut bounds are obtained by truncating such expansions and setting $t=q$ and $x=1$. Thus the above expansion gives the $k=2, m=2$ lower asymptotic bound:
\[C_{2,\infty,2}(q) = \frac{1}{2} - q^2+\frac{1}{2} \frac{6q^2+2q-1}{\sqrt{1-4q}}.\qed\]

\end{proof}
Further expansion in $t$ gives asymptotic Betti numbers which we will cover in the next section. We collect these informal results into the following.
\begin{Theorem}\label{prop:lim}
For fixed $k,m$, and  $q$, the cut bounds $C_{k,n,m}(q)$ converge to the function which is the truncated Taylor expansion
of the smallest solution, in $u$, of the equation $u-g(x,q,k,u)=0$, provided $0\leq q \leq q^*_k = \frac{k-1}{k^2} \;k^{\frac{k-2}{k-1}}$.
For $q^*_k < q \leq 1$, $C_{k,n,m}(q) \rightarrow \infty$ when $m$ is odd and $-\infty$ when $m$ is even.
\end{Theorem}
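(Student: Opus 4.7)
The plan is to analyse the recursion $\H_{k,n+1}(x,q) = g(x,q,k,\H_{k,n}(x,q))$ coefficient-by-coefficient in its Taylor expansion in $x$, splitting into the subcritical regime $q \leq q_k^*$ (convergence) and the supercritical regime $q > q_k^*$ (divergence). A useful simplification comes from the substitution $\H_{k,n}(x,q) = x\,v_n(x)$, valid because every minimal generator of $J_{k,n}$ has positive $x$-degree; a direct expansion using the formula for $g$ gives $g(x,q,k,xv) = x(q + v(1-qx))^k$, so the recursion reduces to
$$v_{n+1}(x) = \bigl(q + v_n(x)(1-qx)\bigr)^k, \qquad v_1(x) = q^k,$$
and the fixed-point equation $u = g(x,q,k,u)$ becomes $v = (q + v(1-qx))^k$. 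Writing $v_n(x) = \sum_{i\geq 0} v_{n,i}\,x^i$ and $v^*(x) = \sum_{i\geq 0} v^*_i\, x^i$ for the smallest real fixed point, the bound and its conjectured limit become
$$C_{k,n,m}(q) = \sum_{i=0}^{m-1} v_{n,i}, \qquad T_m\bigl(u^*(x,q)\bigr)\big|_{x=1} = \sum_{i=0}^{m-1} v^*_i.$$

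For the subcritical case, setting $x = 0$ in the recursion gives the scalar iteration $v_{n+1,0} = (q + v_{n,0})^k$, which is exactly the map whose fixed-point analysis earlier in this section produced $q_k^*$. For $q \leq q_k^*$ the smallest non-negative fixed point $z^*(q)$ exists, and the derivative of $(q+z)^k$ there is $\lambda(q) = k(q+z^*)^{k-1} \leq 1$, with equality only at $q = q_k^*$. Since $v_{1,0} = q^k < z^*$, monotone convergence $v_{n,0} \to v^*_0 = z^*$ follows. For $i \geq 1$, differentiating the recursion $i$ times at $x = 0$ exhibits $v_{n,i}$ as evolving by an iteration that is affine in $v_{n,i}$ with slope $\lambda(q)$ at the fixed point, plus a polynomial expression in the already-controlled $v_{n,0}, \dots, v_{n,i-1}$. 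An induction on $i$ using $\lambda < 1$ yields $v_{n,i} \to v^*_i$; the borderline $q = q_k^*$ (where $\lambda = 1$) is handled separately by monotonicity and comparison with the strictly-contracting regime $q < q_k^*$. Summing the first $m$ stabilised coefficients gives $C_{k,n,m}(q) \to T_m(u^*(x,q))|_{x=1}$.

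For the supercritical case, the map $\phi(z) = (q+z)^k$ satisfies $\phi(z) - z > 0$ on all of $[0,\infty)$ once $q > q_k^*$, since the minimum of $\phi(z) - z$ vanishes precisely at $q_k^*$; hence $v_{n,0}$ increases strictly to $+\infty$, in fact doubly exponentially in $n$ because $v_{n+1,0} \sim v_{n,0}^k$. The signs of the higher coefficients are automatic: the Alexander duality identity $\tilde\H_{k,n}(x,q) = -\tilde G_{k,n}(-x,q)$ together with non-negativity of the Betti numbers gives $v_{n,i} = (-1)^i B_{i+1}^{(n)}(q)$ with $B_{i+1}^{(n)}(q) \geq 0$, so $\operatorname{sgn}(v_{n,i}) = (-1)^i$ for every $q \geq 0$. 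The linear-in-$v_{n,i}$ part of the recursion reads
$$v_{n+1,i} = k(q+v_{n,0})^{k-1}\bigl(v_{n,i} - q\,v_{n,i-1}\bigr) + P_i\bigl(v_{n,0}, \dots, v_{n,i-1}\bigr),$$
with $P_i$ polynomial. Because $v_{n,i}$ and $-q\,v_{n,i-1}$ carry the same sign $(-1)^i$, their magnitudes add constructively in the leading term. Passing to the ratios $\rho_{n,i} := |v_{n,i}|/|v_{n,i-1}|$ and using $v_{n,0} \to \infty$, induction on $i$ shows $\rho_{n,i} \to \infty$ as $n \to \infty$, because the asymptotic recursion for $\rho_{n,i}$ is an expanding affine map once $v_{n,0}$ is large. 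Hence $|v_{n,m-1}|$ eventually dominates the partial sum $C_{k,n,m}(q) = \sum_{i=0}^{m-1} v_{n,i}$; since $v_{n,m-1}$ has sign $(-1)^{m-1}$, we conclude $C_{k,n,m}(q) \to +\infty$ for odd $m$ and $-\infty$ for even $m$.

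The main obstacle is the supercritical analysis: precisely controlling the growth ratios $\rho_{n,i}$ and verifying their divergence (rather than stabilisation) requires careful bookkeeping through the coupled nonlinear recursion, since the correction $P_i$ itself involves products of lower-order $v_{n,j}$'s whose growth must be tracked simultaneously with $v_{n,0}$. The subcritical part is conceptually a classical contraction-plus-induction argument, but the borderline $q = q_k^*$ at which $\lambda = 1$ forces a separate treatment by monotonicity and by limiting from $q < q_k^*$.
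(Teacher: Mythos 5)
Your proposal is essentially a coefficient-wise version of the paper's argument rather than a wholly different proof: the paper likewise analyses the one-dimensional iteration $u_{n+1}=g(u_n)$ with $u_1=xq^k$, uses convexity and monotonicity of $g$ in $u$ to get monotone convergence to the smallest fixed point when one exists, identifies $q^*_k$ as the threshold for existence (explicitly, via the quadratic roots of $g(u)-u$, only for $k=2$, with ``the general case proceeds along the same lines''), and then passes to the truncated bounds at $x=1$ by appealing to ``standard analysis on the uniform convergence of power series''. What you do differently, and what it buys, is (a) the substitution $\H_{k,n}=xv_n$, which turns the recursion into $v_{n+1}=(q+v_n(1-qx))^k$ and makes the $x^0$-coefficient iteration literally the scalar map $z\mapsto(q+z)^k$ whose tangency condition produces $q^*_k$; and (b) explicit treatment of the two steps the paper leaves implicit: convergence of the individual Taylor coefficients (your induction in $i$ on the affine linearisation with slope $\lambda=k(q+z^*)^{k-1}<1$ for $q<q^*_k$), and the sign $(-1)^{m-1}$ of the divergence for $q>q^*_k$. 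Your supercritical bookkeeping can in fact be tightened: every term of $[x^i]\bigl(q+v_n(1-qx)\bigr)^k$ carries the sign $(-1)^i$ (not just the displayed linear part), so $|v_{n+1,i}|\ge kq\,(q+v_{n,0})^{k-1}|v_{n,i-1}|$, the normalised coefficients $|v_{n,i}|/v_{n,0}$ grow like $k^{in}$, and the ratio divergence you assert follows without delicate tracking of $P_i$.

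The one genuine soft spot is the borderline $q=q^*_k$, which you propose to handle ``by monotonicity and comparison with the strictly contracting regime''. There $\lambda=1$, so the contraction argument gives nothing, and the limit object itself degenerates: for $k=2$ the smallest solution involves $\sqrt{1-4q+4q^2x}$, which at $q=\tfrac14$ becomes $\tfrac12\sqrt{x}$, a branch point at $x=0$, so the ``truncated Taylor expansion'' does not exist in the usual sense; correspondingly the asymptotic series $\sum_j\beta_{i,j}(J_{2,\infty})q^j$ diverges at $q=\tfrac14$ for the higher rows of the Betti table (the entries grow like Catalan numbers, $\sim 4^{j}j^{i-5/2}$ up to constants), so comparison with $q<q^*_k$ cannot rescue the endpoint and the higher-$m$ bounds do not stabilise there. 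To be fair, the paper's own statement and proof include the closed interval $0\le q\le q^*_k$ without addressing this, so the defect is inherited rather than introduced; away from the endpoint your argument is sound and, on the two steps mentioned above, more rigorous than the published one.
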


\begin{proof}
The case $m=1$  above is a good guide, because it gives the first term in the Taylor expansions and further analysis
shows that it also gives pole governing the expansion for any  $m > 1$.  Although we cannot get closed forms for the solutions of 
$u-g(x,q,k,u)=0$ for $k >3$, nonetheless we can show the presence of a pole at $q^*_{k}$.

 We use the shorthand  $g(u) = g(x,t,k,u)$. Then, $g(u)$ is convex and increasing in $u$ in the region $0 \leq x,t \leq 1$ and $g(0) = xt^k$. Moreover $u_1 =xt^k$, where $u_1$ is the starting value in the iteration $u_{n+1} = g(u_n)$. Suppose first that, under that suitable conditions on $x,t$, the equation $u=g(u)$, has at least one solution and let $u^*$, be the smallest (there can be no more than two, by the convexity of $g(u)$). Then, for the dynamic system $u_{n+1} = g(u_n)$ the iterate $u_n$ converge upwards to $u^*$. The complication is that the existence of the solution $x^*$ depends on $x$ and $q$. We know that $x^*$ does not exist if and only if
$g(u)>u$, for all $u >0$.

We consider the case $k=2$, for which $q_2^* = \frac{1}{4}$. The function
$g(u)- u$
has roots
$$\frac{1}{2}\frac{(2t^2x-2t+1+\sqrt{4t^2x-4t+1})x}{1+t^2x^2-2tx}, - \frac{1}{2}\frac{(-2t^2x+2t-1+\sqrt{4t^2x-4t+1})x}{1+t^2x^2-2tx}\ .$$
In the region $0 \leq q \leq \frac{1}{4}$ these roots exist for all $x > 0$, except when $t = \frac{1}{x}$, which can be eliminated by taking
$x$ sufficiently small. When $q >\frac{1}{4}$, however, the roots are complex for $x$ sufficiently small, noting that the smallest root is
$$\left(-t+\frac{1}{2}-\frac{1}{2}\sqrt{1-4t}\right)x + O(x^2).$$
In that case $g(u) > u$ and  $u_n$ diverges to infinity. The case of general $k$ proceeds along the same lines.
The fact that the bounds, which are achieved at $x=1$, converge in the ``good" region, $0 \leq q \leq q^*_k $ follows by standard analysis on the uniform convergence of power series. For  $q^*_k < q \leq 1$, the divergence of $C_{k,n,m}(q)$, follows immediately from the divergence of $u_n$, with sign dependent on $m$
\end{proof}
Figure~\ref{fig:critical_bound} shows an example of the behaviour of the cut bounds for $k=2$ and relatively modest value $n=6$. The cut upper  bound for $m=3$ (depicted in green),  and the cut  lower bound for $m=4$ (depicted in red) are already approaching the vertical line at the critical value $q=\frac{1}{4}$.
\begin{figure}[tbp]
\centering
 {\includegraphics[width=80mm]{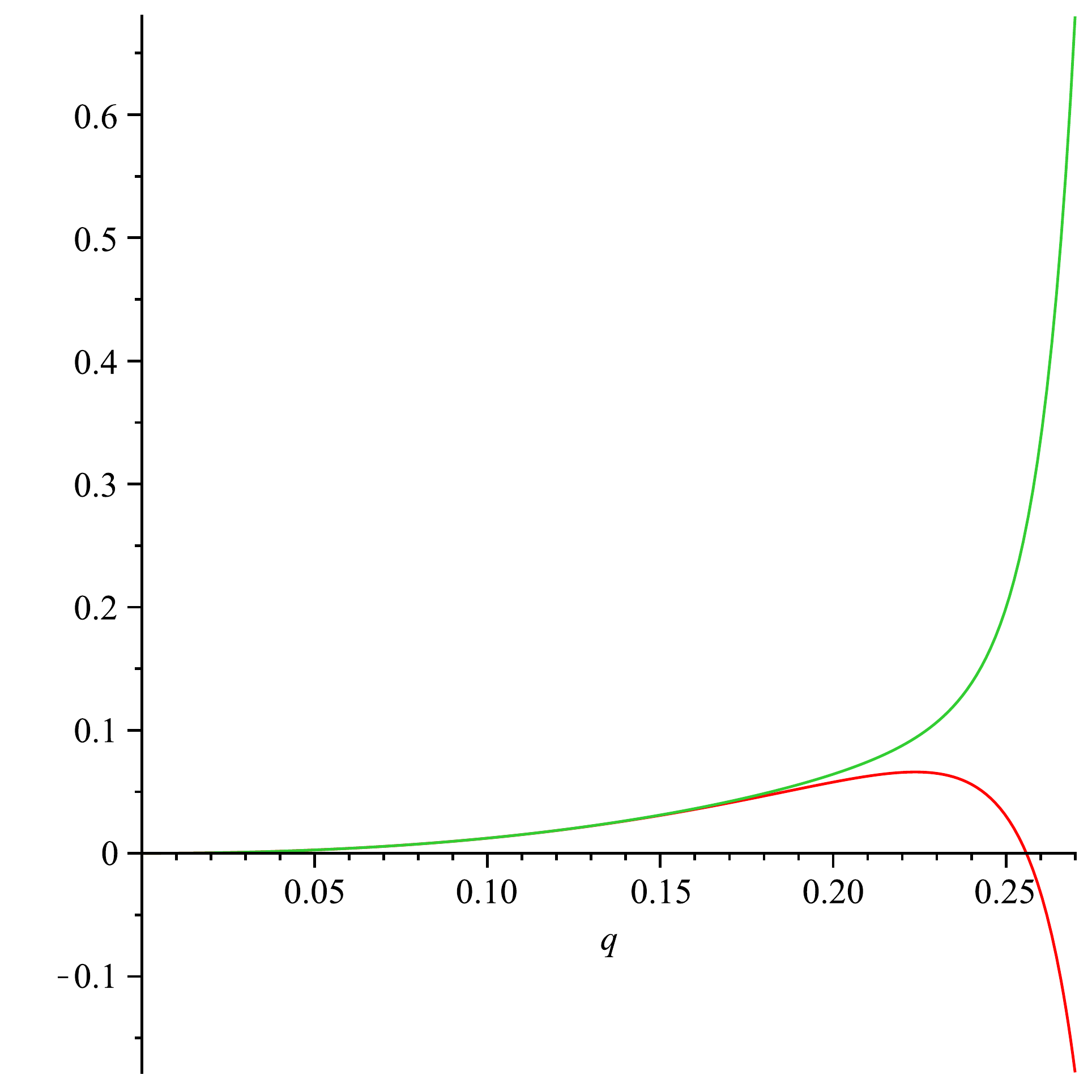}}
\caption{Approaching the $q^*=\frac{1}{4}$ critical cut bounds for $n=6, \;k=2, \;$ $m=3$ (cut upper bound in green) and $m=4$ (cut lower bound in red).}\label{fig:critical_bound}
\end{figure}
\subsection{Asymptotic Betti number: cut case.}
For the case $k=2$ we use the appropriate generating function in (\ref{eq:gen-cut}) with the discussion
in the last section to give the graded Betti number generating function:
\begin{equation}\label{Q2infty}
G_{2,\infty}(x,t) = - \frac{1}{2}\; \frac{(2t-1+2t^2x+(
\sqrt{1-4t-4t^2x})x}{(1+xt)^2}.
\end{equation}
There are more complex formulae for $k>2$.

This generating function enables us to derive a combinatorial formula for the coefficients, which we shall call the {\em asymptotic graded Betti numbers}.

\begin{Theorem}\label{thm:cutBetti}
The asymptotic graded Betti numbers for the cut ideal on a binary $(k=2)$ tree are given by
$$\beta_{i,j}(J_{2,\infty})= \frac{[2(j-i)]!}{(j-i+1)(j-i)(j-i)! (j-2i)!(i-1)!}, \mbox{ for } j \geq 2i \mbox{ and
zero otherwise.}$$
\end{Theorem}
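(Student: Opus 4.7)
The plan is to read off $\beta_{i,j}(J_{2,\infty})$ as the coefficient of $x^i t^j$ in the closed-form expression (\ref{Q2infty}) for $G_{2,\infty}(x,t)$, and the key step is a Catalan-type substitution. I notice that
\[
1-4t-4t^2x = 1-4z, \qquad z := t(1+tx),
\]
so the Catalan identity
\[
1 - \sqrt{1-4z} = 2z\,C(z), \qquad C(z) = \sum_{n \geq 0} C_n z^n, \quad C_n = \tfrac{1}{n+1}\binom{2n}{n},
\]
applies to the radical in (\ref{Q2infty}) as a formal power series in $(x,t)$, since $z$ has no constant term.

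Next I would simplify the numerator of (\ref{Q2infty}) by using $2t - 1 + 2t^2x = 2z - 1$, so that
\[
2t-1+2t^2x + \sqrt{1-4t-4t^2x} \;=\; 2z - \bigl(1 - \sqrt{1-4z}\bigr) \;=\; 2z\bigl(1 - C(z)\bigr) \;=\; -2\sum_{n\geq 1} C_n z^{n+1}.
\]
Inserting this into (\ref{Q2infty}), the prefactor $-\tfrac12$ cancels with the $-2$, and using $z^{n+1} = t^{n+1}(1+tx)^{n+1}$ to absorb the denominator $(1+tx)^2$, I obtain the compact form
\[
G_{2,\infty}(x,t) \;=\; x \sum_{n \geq 1} C_n\, t^{n+1}(1+tx)^{n-1}.
\]

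The final step is coefficient extraction: expanding $(1+tx)^{n-1} = \sum_{k\geq 0} \binom{n-1}{k} t^k x^k$, the monomial $x^i t^j$ appears with $i = k+1$ and $j = n+k+1$, i.e.\ $n = j-i$ and $k = i-1$. The binomial $\binom{n-1}{k} = \binom{j-i-1}{i-1}$ vanishes unless $n-1 \geq k$, giving exactly the range condition $j \geq 2i$. Thus
\[
\beta_{i,j}(J_{2,\infty}) \;=\; C_{j-i}\binom{j-i-1}{i-1},
\]
and rewriting $C_{j-i} = \tfrac{[2(j-i)]!}{(j-i+1)!(j-i)!}$ together with $(j-i+1)! = (j-i+1)(j-i)(j-i-1)!$ lets me cancel $(j-i-1)!$ against the binomial coefficient to produce the stated closed form. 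I would sanity-check against the expansion of $\H_{2,3}$ already displayed in the excerpt (yielding $C_1\binom{0}{0}=1$, $C_2\binom{1}{0}=2$, $C_2\binom{1}{1}=2$, $C_3\binom{2}{0}=5$, $C_3\binom{2}{1}=10$, $C_3\binom{2}{2}=5$, etc.) as a small-degree verification. The main obstacle is purely the algebraic one of spotting the substitution $z = t(1+tx)$ that linearises the radical into the Catalan generating function; once that is in hand, the derivation is a routine manipulation of formal power series.
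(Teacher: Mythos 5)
Your proposal is correct and takes essentially the same route as the paper: the paper's proof temporarily changes variables via $t=\frac{z}{1+y}$, $x=\frac{y(1+y)}{z}$ to recognize the Catalan generating function in \eqref{Q2infty} and then back-substitutes $y=xt$, $z=t(1+xt)$, which is exactly your substitution and yields the same compact form $x\sum_{n\ge 1}C_n\,t^{n+1}(1+xt)^{n-1}$ before the identical binomial coefficient extraction. Apart from this cosmetic repackaging of the substitution, your argument coincides with the paper's, including the final identification $\beta_{i,j}=C_{j-i}\binom{j-i-1}{i-1}$ and the range condition $j\ge 2i$.
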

\begin{proof}
We temporarily make the transformation $t = \frac{z}{1+y}, x = \frac{y(1+y)}{z}$ in the generating function (\ref{Q2infty})
giving:
\begin{equation}
G_{2,\infty}(x,t) = \frac{1-\sqrt{1-4z}}{2z}\frac{y}{1+y} - \frac{y}{1+y}\ .
\end{equation}
We recognize the first term on the right hand side as the generating function of the Catalan numbers $c_r = \frac{1}{r+1} { 2r \choose r} z^r$,
$$ \frac{1-\sqrt{1-4z}}{2z} = \sum_{r=0}^{\infty}c_r z^r.$$
We now back substitute $y=xt, z=t(1+xt)$ so that the generating function for the $c_r$ becomes: $\sum_{r=0}^{\infty} c_r t^r (1+xt)^r$. Expanding each term $(1+xt)^r$ into binomial terms and incorporating the other terms $y$ we find the Betti numbers $\beta_{i,j} (J_{2,\infty})$ as the coefficient of $x^i x^j$, for $j \geq 2i$.
\begin{eqnarray*}
\beta_{i,j}(J_{2, \infty})& = & c_{j-i} \sum_{r=0}^{i-1}(-1)^{i+j-1}{j-i\choose r}\\
 & = & c_{j-i} \frac{(j-i-1)!}{(i-1)!(j-2i)!},
\end{eqnarray*}
for $j\geq 2i$ and zero otherwise. Using the form of $c_{j-i}$, we obtain the result.
\end{proof}

\vspace{-.2cm}
\begin{Theorem}\label{asymp}
In the region $0\leq q \leq q^*_k$, for each $i$ there is a maximal integer $N(n,i)$ such that for any $j < N(n,j)$
$$\beta_{i,j}(J_{k,n}) = \beta_{i,j}(J_{k,\infty}).$$
Furthermore $N(n,j)$ is increasing in $n$, for fixed $i$.
\end{Theorem}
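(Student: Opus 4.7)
The plan is to control the formal difference $D_n := \tilde{G}_{k,n}(x,t) - \tilde{G}_{k,\infty}(x,t)$ in $K[[x,t]]$ and to show that its $t$-adic valuation grows linearly in $n$. Denote the recursion operator from (\ref{eq:gen-cut}) by $g(u) = x^{-(k-1)}\bigl((1+tx)(1+u)-1\bigr)^k$, so that $\tilde{G}_{k,n} = g(\tilde{G}_{k,n-1})$. I will interpret $\tilde{G}_{k,\infty}$ as the unique formal power-series fixed point of $g$, which in the convergent region $0 \le q \le q^*_k$ agrees with the analytic limit from Theorem~\ref{prop:lim}.

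A preliminary induction shows that both $\tilde{G}_{k,n}$ and $\tilde{G}_{k,\infty}$ have $x$-valuation at least $1$ and $t$-valuation at least $k$: indeed $(1+tx)(1+u) - 1 = tx + u(1+tx)$ already has $x$- and $t$-valuations at least $1$ whenever $u$ does, so raising to the $k$-th power bumps these to $k$, and multiplying by $x^{-(k-1)}$ lands back in $K[[x,t]]$. This valuation bookkeeping is what makes the Laurent factor $x^{-(k-1)}$ harmless.

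The core of the argument is the algebraic identity
\[
g(u) - g(v) \;=\; x^{-(k-1)}(1+tx)(u-v)\sum_{j=0}^{k-1} A^j B^{k-1-j},
\]
where $A = (1+tx)(1+u)-1$ and $B = (1+tx)(1+v)-1$. Specializing $u = \tilde{G}_{k,n-1}$ and $v = \tilde{G}_{k,\infty}$ yields $D_n = (1+tx)\, D_{n-1}\, M_n$ with $M_n = x^{-(k-1)} \sum_{j} A_n^j B^{k-1-j}$. Since $A_n$ and $B$ both have $x$- and $t$-valuations at least $1$, each summand has both valuations at least $k-1$; hence $M_n \in K[[x,t]]$ and has $t$-valuation at least $k-1$. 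Iterating therefore gives $\mathrm{ord}_t D_n \ge \mathrm{ord}_t D_1 + (n-1)(k-1) =: \nu(n)$, where $\nu(n)$ is strictly increasing in $n$.

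Since every monomial appearing in $D_n$ has $t$-degree at least $\nu(n)$, the coefficients $\beta_{i,j}(J_{k,n})$ and $\beta_{i,j}(J_{k,\infty})$ coincide for every $i$ whenever $j < \nu(n)$. Taking $N(n,i)$ to be the maximal such threshold, we obtain $N(n,i) \ge \nu(n)$, and this lower bound strictly increases in $n$ for each fixed $i$. The main technical obstacle is the $x$-valuation bookkeeping: one must verify carefully that the $x$-valuations coming from the factors $A_n^j B^{k-1-j}$ accumulate to exactly $k-1$ and so cancel the formal Laurent factor $x^{-(k-1)}$, keeping every $D_n$ inside $K[[x,t]]$. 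A secondary matter—harmless here—is the identification, in the region $0 \le q \le q^*_k$, of the formal fixed point $\tilde{G}_{k,\infty}$ with the analytic limit from Theorem~\ref{prop:lim}; this follows from the $(t)$-adic convergence of the iterates just established together with the convergence argument used in the proof of that theorem.
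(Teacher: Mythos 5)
Your argument is sound and reaches the stated conclusion, but by a genuinely different route than the paper. The paper's proof is a two-line analytic appeal: on $0\leq q\leq q^*_k$ the power series derived from the iterates $u_n$ converge uniformly (Theorem~\ref{prop:lim}), and since the coefficients are integers, convergence of each coefficient forces eventual equality with the limiting coefficient. You never leave the formal power series ring: the factorization $A^k-B^k=(A-B)\sum_{j=0}^{k-1}A^jB^{k-1-j}$ turns the recursion (\ref{eq:gen-cut}) into a $t$-adic contraction on series of positive valuation, giving $\mathrm{ord}_t(\tilde{G}_{k,n}-\tilde{G}_{k,\infty})\geq \mathrm{ord}_t(\tilde{G}_{k,1}-\tilde{G}_{k,\infty})+(n-1)(k-1)$ and hence coefficientwise agreement below an explicit, linearly growing threshold. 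This buys more than the paper's argument: a quantitative rate (for $k=2$ it gives $\nu(n)=n+2$, which the tables show is essentially sharp — for $n=4$ the first discrepancy occurs exactly at $t$-degree $6$), and complete independence from $q$, so the hypothesis $0\leq q\leq q^*_k$ — which in any case has no bearing on an identity between integers — plays no role; the analytic region only enters when identifying the formal fixed point with the paper's analytically defined $G_{k,\infty}$, which your uniqueness observation handles. Two small repairs are worth making. First, $g$ has more than one formal power-series fixed point (for $k=2$ both branches of $A^2=xu$ are analytic at the origin), so the uniqueness claim must be stated among series of positive $t$-valuation — exactly what your contraction identity proves — and existence of $\tilde{G}_{k,\infty}$ should be recorded as the $t$-adic limit of the iterates, obtained from the same estimate applied to $\tilde{G}_{k,n}-\tilde{G}_{k,n-1}$. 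Second, like the paper's own proof, you establish an increasing lower bound $\nu(n)$ for the maximal agreement threshold $N(n,i)$ rather than literal monotonicity of $N(n,i)$ itself; given how loosely the theorem is phrased, this is the intended content, and your version is the cleaner and more informative one.
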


\begin{proof}
This follows from the uniform convergence of the power series derived form $u_n$ and the fact that the coefficients
are integers.
\end{proof}

\vspace{-.1cm}
The following tables show the graded Betti numbers $\beta_{ij}$ of the cut ideal for $k=2$,  $n = 2, \ldots, 4 $ and ranges of
values of $i=1, \ldots, 7$ and $j= 2, \ldots, 14$. Note that its $(i,j)$-entry is simply $\beta_{i,i+j}(S/J_{k,n})$.  The table for $n=2,3$ are complete. The last table gives the asymptotic Betti numbers.

\begin{center}
\begin{tabular}{|l||l|l|l|l|l|}
  \hline
  $i \backslash j$ & 0 & 1 & 2 & 3  \\
  \hline\hline
  ${\rm total}$ & 1 & 4 & 4 & 1 \\
  \hline
  \hline
  0 & 1 &    &  &    \\
  1 &    & 1 &  &  \\
  2 &    & 2 & 2 &    \\
  3 &    & 1 & 2 & 1 \\
  \hline
\end{tabular}
\end{center}
\begin{center}
$n=2$
\end{center}

\begin{center}
\begin{tabular}{|r||r|r|r|r|r|r|r|r|r|r|r|r|r|}
  \hline
  $i \backslash j$ & 0 & 1 & 2 & 3 & 4 & 5 & 6 & 7  \\
    \hline\hline
  ${\rm total}$ & 1 & 25 & 80 & 114  & 90 & 41 & 10 & 1 \\
\hline
  \hline
  0 &1&    &      &  &  &  &  &   \\
  1 &  &  1&      &  &  &  &  &   \\
  2 &  &  2& 2   &  &  &  &  &   \\
  3 &  &  5& 10 & 5 &  &  &  &   \\
  4 &  &  6& 18 & 18 & 6 &  &  &   \\
  5 &  &  6& 24 & 36 & 24 & 6 &  &  \\
  6 &  &  4& 20 & 40 & 40 & 20 & 4 &   \\
  7 &  &  1& 6  & 15 & 20 & 15 & 6 & 1  \\
  \hline
\end{tabular}
\end{center}
\begin{center}
$n=3$
\end{center}

\begin{center}
\begin{tabular}{|l||l|l|l|l|l|l|l|l|l|l|l|l|l|l|}
  \hline
$i \backslash j$ & 0 & 1 & 2 & 3 & 4 & 5 & 6 & 7 \\
  \hline\hline
  ${\rm total}$ & 1 & 676 & 5460 & 21113  & 51348 & 87288 & 109314 & 103726   \\ \hline
 \hline
  0 &1&    &      &  &  &  &   &  \\
  1 &  &  1&      &  &  &  &   &  \\
  2 &  &  2& 2   &  &  &  &   &  \\
  3 &  &  5& 10 & 5 &  &  & &  \\
  4 &  &14& 42 & 42 & 14   &  &  & \\
  5 &  &26&104& 156 & 104 & 26 &  & \\
  6 &  &44&220& 440 & 440 & 220 & 44   & \\
  7 &  &69&414& 1035 & 1380 & 1035 & 414 & 69 \\
  8 &  &94&658& 1974 & 3290 & 3290 &  1974  & 658  \\
  9 &  &   114 & 912 & 3192 & 6384 & 7980 &  6384 & 3192 \\
  10& &116 &1044&  4176&  9744& 14616 & 14616  & 9744 \\
  11 &&94 & 940&  4230& 11280& 19740&  23688 & 19740 \\
  12 && 60 &660&  3300&  9900& 19800&  27720 & 27720  \\
  13 &&28 & 336&  1848&  6160& 13860&  22176 &25872 \\
  14 &&8 & 104&   624&  2288 & 5720&  10296 & 13728 \\
    \hline
\end{tabular}
\end{center}
\begin{center}
$n=4$
\end{center}
\vspace{3mm}

\begin{center}
\begin{tabular}{|l||l|l|l|l|l|l|l|l|l|l|l|l|l|l|}
  \hline
$i \backslash j$ & 0 & 1 & 2 & 3 & 4 & 5 & 6 & 7 \\
  \hline\hline
  ${\rm total}$ & 1 & 458329 & 8308144 & 73630338  & 424216050 & 1783078865 & 5818552406 & 15319701281
\\ \hline
 \hline
  0 &1&       &      &  &  &  &   &  \\
  1 &  &   1  &      &  &  &  &   &  \\
  2 &  &  2& 2   &  &  &  &   &  \\
  3 &  &  5& 10 & 5 &  &  & &  \\
  4 &  &14& 42 & 42 & 14   &  & &  \\
  5 &  &42&168 & 252 & 168 & 42 & &  \\
  6 &  &100&500& 1000 & 1000 & 500 & 100  &   \\
  7 &  &221&1326& 3315 & 4420 & 3315 & 1326 & 221  \\
  8 &  &470&3290& 9870 & 16450 & 16450 &9870 & 3290   \\
  9 &  &   958 & 7664 &26824 & 53648 & 67060 &  53648 & 26824 \\
  10& &1860 &16740&  66960&  156240& 234360 & 234360 & 156240 \\
  11 &&3434 & 34340&  154530& 412080& 721140& 865368  & 721140 \\
  12 && 6036 &66396&  331980&  995940& 1991880&  2788632  & 2788632 \\
  13 &&10068 & 120816&  664488&  2214960& 4983660& 7973856 & 9302832  \\
  14 &&15864 & 206232&  1237392&  4537104 & 11342760&  20416968 & 27222624  \\
    \hline
\end{tabular}
\end{center}
\begin{center}
$n=5$
\end{center}
\vspace{3mm}

\begin{center}
\begin{tabular}{|l||l|l|l|l|l|l|l|l|l|l|l|l|l|l|}
  \hline
$i \backslash j$ & 0 & 1 & 2 & 3 & 4 & 5 & 6 & 7 \\
 \hline
  0 &1&       &      &  &  &  &   &  \\
  1 &  &   1  &      &  &  &  &   &  \\
  2 &  &  2& 2   &  &  &  &   &  \\
  3 &  &  5& 10 & 5 &  &  & &  \\
  4 &  &14& 42 & 42 & 14   &  & &  \\
  5 &  &42&168 & 252 & 168 & 42 & &  \\
  6 &  &132&660& 1320& 1320 & 660 & 132  &   \\
  7 &  &429&2574& 6435 & 8580 & 6435 & 2574 & 429  \\
  8 &  &1430&10010& 30030 & 50050 & 50050 &30030 & 10010   \\
  9 &  & 4862 & 38896 &136136 & 272272 & 340340 &  272272 & 136136 \\
  10& &16796&151164&  604656&  1410864& 2116296 & 2116296 & 1410864 \\
  11 &&58786& 587860&  2645370& 7054320& 12345060& 14814072  & 12345060 \\
  12 && 208012&2288132&  11440660&  34321980& 68643960&  96101544  & 96101544 \\
  13 &&742900& 8914800&  49031400&  163438000& 367735500& 588376800 & 686439600  \\
  14 &&2674440&34767720&  208606320&  764889840 & 1912224600&  3442004280 & 4589339040  \\
    \hline
\end{tabular}
\end{center}
\begin{center}
$n=\infty$
\end{center}
\vspace{3mm}

Let us make a final remark on the relation of the Betti numbers of $J_{2,n}$ and $J_{2,\infty}$ with Mandelbrot and Catalan numbers. This will make evident the interplay of algebra, combinatorics and asymptotics that permeates this paper.

Mandelbrot numbers are defined as follows: The Mandelbrot Set is a fractal formed by iterating the following polynomials:
\begin{eqnarray*}
z_0(q)&=&0\\
z_n(q)&=&z_{n-1}(q)^2+q
\end{eqnarray*}
The first few polynomials in the sequence are 
\begin{eqnarray*}
z_0(q)&=&0\\
z_1(q)&=&q\\
z_2(q)&=&q^2+q\\
z_3(q)&=&(q^2+q)^2+q=q+q^2+2q^3+q^4\\
z_4(q)&=&(q+q^2+2q^3+q^4)^2+q=q+q^2+2q^3+5q^4+6q^5+6q^6+4q^7+q^8\\
\end{eqnarray*}

We see that the coefficient of $q^j$ in $z_i(q)$ is the $(i,j)$-Mandelbrot number, and we denote it by $M_{i,j}$.
\begin{Lemma}
$C_{1,n,2}(q)=z_{n+1}(q)-q$.
\end{Lemma}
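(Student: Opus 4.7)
My plan is induction on $n$, leveraging the recurrence (\ref{C_k,n,2}) together with the defining iteration $z_{n+1}(q) = z_n(q)^2 + q$ of the Mandelbrot polynomials. The whole of Section~\ref{sec:bounds} from the Mandelbrot discussion onwards operates in the binary ($k=2$) setting (cf.\ the reference to $J_{2,n}$ and $J_{2,\infty}$ opening this subsection), so I read the recurrence (\ref{C_k,n,2}) at $k=2$ as $C_{k,n,2}(q) = (C_{k,n-1,2}(q) + q)^2$. The key observation is that rewriting the target identity as $C_{k,n,2}(q) + q = z_{n+1}(q)$ aligns the two recurrences perfectly, since squaring and then adding $q$ is exactly the Mandelbrot step.

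For the base case $n=1$, I would read $C_{k,1,2}(q)$ directly off the formula $\H_{k,1}(x,t) = t^k x$ quoted immediately after (\ref{C_k,n,2}). Because this polynomial has $x$-degree $1 < m = 2$, the truncation $T_2$ is trivial, and evaluation at $x=1$ yields $C_{k,1,2}(q) = q^k$; at $k=2$ this gives $q^2 = (q^2 + q) - q = z_2(q) - q$, verifying the base case.

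For the inductive step, assuming $C_{k,n-1,2}(q) + q = z_n(q)$, (\ref{C_k,n,2}) gives
\begin{equation*}
C_{k,n,2}(q) + q = \bigl(C_{k,n-1,2}(q) + q\bigr)^2 + q = z_n(q)^2 + q = z_{n+1}(q),
\end{equation*}
closing the induction.

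The main (and really only) obstacle is interpretative rather than technical: the squaring in the Mandelbrot iteration forces the exponent in (\ref{C_k,n,2}) to be $2$, and the subscripts in the statement must be read in that ambient $k=2$ context. Once this alignment is fixed, the proof is a two-line induction whose only ingredients are the initial value $\H_{k,1}(x,t) = t^k x$ and the recurrence (\ref{C_k,n,2}); no further resolution-theoretic input is needed.
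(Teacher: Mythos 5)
Your proof is correct and is essentially the paper's own argument: the same induction on $n$, with the same base case $C_{\cdot,1,2}(q)=q^2=z_2(q)-q$ and the same use of the recurrence \eqref{C_k,n,2} at $k=2$ to match the Mandelbrot step $z_{n+1}=z_n^2+q$. Your explicit remark that the subscripts must be read in the ambient $k=2$ setting is a helpful clarification of a notational slip the paper leaves implicit.
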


\begin{proof}
The proof goes by induction. For $n=1$ we have that $C_{1,1,2}(q)=q^2$ and $z_2(q)=q^2+q$. 
Now, $C_{1,n+1,2}(q)=(C_{1,n+1,2}(q)+q)^2$ and by the induction step, this is $(z_{n+1}(q)-q+q)^2=z_{n+1}(q)^2$. And on the other hand, we have that, by definition, $z_{n+2}(q)=z_{n+1}(q)^2+q$.
\end{proof}

This identification of polynomials gives us an expression of the Betti numbers of $J_{2,n}$ in terms of Mandelbrot numbers:

\begin{Corollary}
$\beta_{i,i+j}(J_{2,n})=M_{n+1,j+1}{{j-1}\choose{i-1}}$.
\end{Corollary}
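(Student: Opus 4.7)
The plan is to upgrade the identification provided by the Lemma, which pins down the first strand of Betti numbers, to a full bivariate closed form for $\tilde{G}_{2,n}(x,t)$; the individual Betti numbers can then be read off by a binomial expansion.

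First I would prove by induction on $n$ the identity
\[
\tilde{G}_{2,n}(x,t)\,(1+xt)^{2} \;=\; x\bigl(z_{n+1}(s)-s\bigr),\qquad s := t(1+xt).
\]
The base case $n=1$ is immediate: $\tilde{G}_{2,1}(x,t)=xt^{2}$ and $z_{2}(s)-s=s^{2}=t^{2}(1+xt)^{2}$, so both sides equal $xt^{2}(1+xt)^{2}$. For the inductive step, I substitute the hypothesis into the recursion (\ref{eq:gen-cut}) specialized to $k=2$. The key algebraic collapse is
\[
(1+xt)\bigl(1+\tilde{G}_{2,n-1}(x,t)\bigr)-1 \;=\; \frac{x\,z_{n}(s)}{1+xt},
\]
which follows from the single observation $(1+xt)^{2}-xs = 1+xt$ that lets the two fractional pieces combine into one term proportional to $z_{n}(s)$. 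Squaring, dividing by $x$, and applying the Mandelbrot recursion $z_{n+1}(s)=z_{n}(s)^{2}+s$ then yields the claim for $n$. Extracting $[x^{1}]$ from both sides recovers precisely the Lemma, so this displayed identity is its natural two-variable refinement.

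Second, I would expand the right-hand side combinatorially. Writing $z_{n+1}(s)-s=\sum_{r\geq 2}M_{n+1,r}\,s^{r}$, substituting $s^{r}=t^{r}(1+xt)^{r}$, and applying the binomial theorem $(1+xt)^{r-2}=\sum_{k\geq 0}\binom{r-2}{k}(xt)^{k}$ gives
\[
\tilde{G}_{2,n}(x,t) \;=\; \sum_{r\geq 2}\sum_{k\geq 0} M_{n+1,r}\binom{r-2}{k}\, x^{k+1}\,t^{r+k}.
\]
Setting $k=i-1$ and $r=j+1$ identifies the coefficient of $x^{i}t^{i+j}$ as $M_{n+1,j+1}\binom{j-1}{i-1}$, which is precisely the value asserted for $\beta_{i,i+j}(J_{2,n})$ (read, as in the Notation adopted earlier, as the Betti number appearing in the generating function $\tilde{G}_{2,n}$).

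The main obstacle is the algebraic telescoping in the inductive step; once the compact form $xz_{n}(s)/(1+xt)$ is isolated, the Mandelbrot recursion and the binomial theorem finish the argument mechanically, with the binomial factor $\binom{j-1}{i-1}$ originating from the exponent $r-2$ in $(1+xt)^{r-2}$ after dividing the identity by $(1+xt)^{2}$.
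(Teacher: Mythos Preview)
Your proof is correct and in fact more complete than the paper's own argument. The paper's proof is very terse: it invokes the Lemma to settle the case $i=1$ (reading $\beta_{1,1+j}$ off from $C_{2,n,2}(q)=z_{n+1}(q)-q$) and then simply asserts that the binomial factor $\binom{j-1}{i-1}$ ``comes from the recursive expression of the Betti numbers of $J_{2,n}$,'' without spelling out how. Your argument instead proves the full bivariate identity
\[
\tilde{G}_{2,n}(x,t)(1+xt)^{2}=x\bigl(z_{n+1}(s)-s\bigr),\qquad s=t(1+xt),
\]
by induction on $n$ using the recursion \eqref{eq:gen-cut}, and then reads off all coefficients at once via the binomial expansion of $(1+xt)^{r-2}$. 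This makes the origin of the factor $\binom{j-1}{i-1}$ completely transparent.

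It is worth noting that your route is exactly the finite-$n$ analogue of the paper's proof of Theorem~\ref{thm:cutBetti}: there the substitution $y=xt$, $z=t(1+xt)$ converts $G_{2,\infty}$ into the Catalan generating function times $\frac{y}{1+y}$, and back-substitution produces the binomial factor; you do the same with $z_{n+1}(s)$ in place of the Catalan series. So while your approach differs from the paper's proof of \emph{this} Corollary, it is entirely in the spirit of the surrounding section and arguably the proof the paper should have given. The inductive verification you outline (the collapse to $xz_n(s)/(1+xt)$ using $t(1+xt)=s$) is straightforward to check and the coefficient extraction is routine.
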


\begin{proof}
We have that $\beta_{1,j+1}(J_{2,n})$ is the coefficient of $q^j$ in $C_{1,n,2}(q)$. Since $C_{1,n,2}(q)=z_{n+1}(q)-q$, the result holds for $i=1$. For $i>1$ we only need to multiply by the binomial coefficient, which comes from the recursive expression of the Betti numbers of $J_{2,n}$.
\end{proof}

In the asymptotic case $n=\infty$ we have a similar expression, where Mandelbrot numbers are substituted by Catalan numbers:

\begin{Corollary}
$\beta_{i,i+j}(J_{2,\infty})=c_{j}{{j-1}\choose{i-1}}$.
\end{Corollary}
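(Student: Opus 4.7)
The plan is to reduce the claim to Theorem~\ref{thm:cutBetti}, which already supplies a closed product form for the asymptotic Betti numbers. Making the substitution $j\mapsto i+j$ in that theorem (so the left-hand side has the shape $\beta_{i,i+j}$ that appears in the statement of the corollary), the formula becomes
$$\beta_{i,i+j}(J_{2,\infty}) = \frac{(2j)!}{(j+1)\,j\,\cdot j!\,(j-i)!\,(i-1)!},$$
valid for $j\geq i$ and zero otherwise (the inequality $j\ge 2i$ of Theorem~\ref{thm:cutBetti} translates to $i+j\ge 2i$, i.e.\ $j\ge i$). What remains is a one-line algebraic factorisation: writing $j! = j\,(j-1)!$ and grouping the Catalan factor apart from the binomial factor, the right-hand side splits as
$$\frac{(2j)!}{(j+1)\,j!\,j!}\cdot \frac{(j-1)!}{(i-1)!\,(j-i)!} \;=\; c_j\binom{j-1}{i-1},$$
which is exactly the assertion. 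When $j<i$ both sides vanish trivially (the left by the vanishing clause of Theorem~\ref{thm:cutBetti}, the right because the binomial coefficient is zero), so the identity holds in all cases.

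A more conceptual alternative, parallel to the proof of the preceding corollary, would be to first identify $\beta_{1,j+1}(J_{2,\infty}) = c_j$ by passing to the limit $n\to\infty$ in the scalar recurrence (\ref{C_k,n,2}) with $k=2$: this recurrence has the fixed-point equation $C = (C+q)^2$, whose branch consistent with Theorem~\ref{prop:lim} is $C_{1,\infty,2}(q) = \tfrac{1}{2}\bigl(1-\sqrt{1-4q}\bigr) - q$, which is precisely $\sum_{j\geq 1} c_j q^{j+1}$. Then the factor $\binom{j-1}{i-1}$ at higher homological degree is extracted from the tensor-product structure exactly as in the Mandelbrot case, invoking Theorem~\ref{asymp} to guarantee that the coefficients at each fixed $(i,j)$ do stabilise to their asymptotic values.

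I expect the only subtlety to be convergence: the Taylor identification of $C_{1,\infty,2}$ with the Catalan generating function is a genuinely analytic statement on $0\leq q\leq q_2^* = \tfrac{1}{4}$, whereas the Betti numbers are integers stabilising from below by Theorem~\ref{asymp}. For that reason I would take the direct route via Theorem~\ref{thm:cutBetti}, since it reduces the corollary to a formal manipulation of factorials and sidesteps the convergence discussion entirely; the second route is kept only as a sanity check that the two recursions — the scalar one for $\beta_{1,\bullet}$ and the tensor one for higher $i$ — are compatible with the Catalan/binomial split.
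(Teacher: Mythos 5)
Your proposal is correct and follows essentially the same route as the paper: the paper's proof is precisely the substitution $j\mapsto i+j$ in the expression $c_{j-i}\binom{j-i-1}{i-1}$ appearing in the proof of Theorem~\ref{thm:cutBetti}, and your argument performs the same reindexing starting from the theorem's stated factorial formula, with the extra factorization step just undoing the last line of that proof. The range bookkeeping ($j\ge 2i$ becoming $j\ge i$, both sides vanishing otherwise) is also handled correctly.
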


\begin{proof}
Just substitute $j$ by $i+j$ in the last expression of the proof of Theorem~\ref{thm:cutBetti}. 
\end{proof}

The last two corollaries together with the asymptotic study of the Betti numbers imply that:

\begin{Corollary}
$lim_{n\rightarrow\infty} M_{n,j}=c_j$.
\end{Corollary}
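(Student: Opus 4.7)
The plan is to deduce this limit directly from the two immediately preceding corollaries together with the stabilization statement of Theorem~\ref{asymp}.

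First, I would specialize both of those corollaries to homological degree $i=1$. Since $\binom{j-1}{0}=1$, the formulas collapse to the clean identifications
\[
\beta_{1,1+j}(J_{2,n}) \;=\; M_{n+1,j+1} \quad\text{and}\quad \beta_{1,1+j}(J_{2,\infty}) \;=\; c_j,
\]
so that the Mandelbrot coefficients and the Catalan numbers appear as two instances of the same linear-strand bigraded Betti sequence, one at finite depth $n$ and the other in the limit. This reduces the problem of comparing the two number sequences to a problem of comparing Betti numbers.

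Next, I would invoke Theorem~\ref{asymp}, which asserts that in the subcritical regime each graded Betti number $\beta_{i,j}(J_{k,n})$ actually equals its asymptotic value $\beta_{i,j}(J_{k,\infty})$ once $n$ exceeds a threshold $N(n,j)$, and in particular converges to it as $n\to\infty$. Applying this with $k=2$ and $i=1$ immediately yields $M_{n+1,j+1}\to c_j$, and a trivial reindexing produces the claimed limit.

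The argument is essentially a bookkeeping exercise; there is no serious obstacle beyond aligning the three different indexings in play (the Mandelbrot double index coming from the iterated polynomial, the Catalan single index coming from the generating function in \eqref{Q2infty}, and the bigraded Betti index). All the real work has already been done in Theorem~\ref{asymp}, whose proof in turn rests on uniform convergence of power series together with the integrality of Betti numbers; the only subtlety worth flagging is that the subcritical hypothesis $q\leq q_2^*=\tfrac14$ does not restrict the conclusion here, since stabilization is being used coefficient-by-coefficient in the formal power series and not evaluated at any particular numerical value of $q$.
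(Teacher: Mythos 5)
Your proposal is correct and is essentially the paper's own argument: the paper derives this corollary precisely by combining the identifications $\beta_{i,i+j}(J_{2,n})=M_{n+1,j+1}\binom{j-1}{i-1}$ and $\beta_{i,i+j}(J_{2,\infty})=c_j\binom{j-1}{i-1}$ (your $i=1$ specialization) with the stabilization of graded Betti numbers in Theorem~\ref{asymp}. The only caveat is that your ``trivial reindexing'' literally gives $M_{n+1,j+1}\to c_j$, i.e.\ $M_{n,j}\to c_{j-1}$, an off-by-one shift that is already present in the paper's own indexing of the corollary, not a defect of your argument's structure.
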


\appendix
\section{Tensor product of complexes}\label{sec:tensor}
 To keep the paper self-contained, we review here some basic and relevant notions of tensor product of resolutions.
We begin by recalling the tensor product of resolutions  from \cite{Eisenbud}.  The tensor product of two chain complexes $(A,d_1)$ and $(B,d_2)$, say $A\otimes B$, is formed by taking all products
$A_i \otimes B_j$ and letting $(A \otimes B)_k = \bigoplus_{i+j=k} A_i\otimes B_j$. The  differential maps are  defined as $\partial(a\otimes b) = d_1a \otimes b + (-1)^i a \otimes d_2b$, when $a\in A_i$. Then we have $\partial^2 = 0$ and $\partial$ induces a natural map $\partial:H(A)\otimes H(B) \rightarrow H(A\otimes B)$ such that $\partial(a\otimes b)=a\otimes b$. If $a=d_1c$ is a boundary and $b$ is a cycle, then $a \otimes b = \partial(c\otimes b)$ is again a  boundary which shows that
$\partial$ is well-defined.

\begin{Lemma}\label{initial}
Let $I_i\subseteq S_i$ be a monomial ideal in the polynomial ring $S_i$ for $i=1,\ldots,r$, and let $I=I_1+\cdots+I_r$ be the ideal in the polynomial ring $S= S_1\otimes\cdots\otimes S_r$. Assume that $\mathcal{F}_{i}$ is the minimal free resolution of $S/I_i$ for all $i$.
Then the minimal free resolution of $S/I$ is obtained by $\mathcal{F}_{1}\otimes \mathcal{F}_{2}\otimes \cdots\otimes \mathcal{F}_{r}$. In particular,
\begin{itemize}
\item[$(i)$]
$
\beta_{i,j}(S/I)=\sum_{(i_1,\ldots,i_r)\in A_{i,r}\atop (j_1,\cdots,j_r)\in B_{j,r}}
\beta_{i_1,j_1}(S/I_{1})\cdots \beta_{i_r,j_r}(S/I_{r}),
$
where
\[
A_{i,r}=\{(i_1,\ldots,i_r):\ i_1+\cdots+i_r=i\}\ {\rm and}\ B_{j,r}=\{(j_1,\ldots,j_r):\ j_1+\cdots+j_r=j\}.
\]

\item[$(ii)$] The   Hilbert
series of $S/I$ is
$
\HS_{I}(t)=\frac{\prod_{i=1}^r (1+Q_{S/I_i}(t))}{(1-t)^d},
$
where $\HS_{I_i}(t)=\frac{1+Q_{S/I_i}(t)}{(1-t)^d}$ and $d$ is the number of variables of the ring $S$. Note that here we are looking at the ideals $I_i$ in the polynomial ring $S$.
\end{itemize}
In particular, if all the ideals are the same (but in polynomial rings on disjoint set of variables) then we have 
$
\HS_{I}(x,t)=\frac{(1+Q_{S/I_i}(t))^r}{(1-t)^d} and
$

\begin{itemize}
\item[$(iii)$]
$
\beta_{i,j}(S/I)=\sum_{s=1}^r\sum_{(i_1,\ldots,i_s)\in A_{i,s}\atop (j_1,\cdots,j_s)\in B_{j,s}}
{r \choose s}
\beta_{i_1,j_1}(S/I_{1})\cdots \beta_{i_s,j_s}(S/I_{1}),
$
where
\[
A_{i,s}=\{(i_1,\ldots,i_s):\ i_1+\cdots+i_s=i,\ i_1,\ldots,i_s>0\}\ {\rm and}
\]
\[
 B_{j,s}=\{(j_1,\ldots,j_s):\ j_1+\cdots+j_s=j\}.
\]
\end{itemize}
\end{Lemma}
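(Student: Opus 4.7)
The plan is to establish the lemma by showing that the tensor product of the minimal free resolutions $\mathcal{F}_1 \otimes \cdots \otimes \mathcal{F}_r$ is itself a minimal free resolution of $S/I$, and then to extract parts (i), (ii), (iii) as essentially bookkeeping consequences of this single structural fact.

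First I would set up the algebraic identification. Since the $S_i$ are polynomial rings on pairwise disjoint sets of variables, $S = S_1 \otimes_K \cdots \otimes_K S_r$ and
\[
S/I \;=\; S/(I_1 S + \cdots + I_r S) \;\cong\; (S_1/I_1) \otimes_K \cdots \otimes_K (S_r/I_r).
\]
Because $S$ is faithfully flat over each $S_i$, extending scalars to $S$ yields a free resolution $\mathcal{F}_i \otimes_{S_i} S$ of $S/I_i S$ over $S$; moreover, minimality is preserved because the differentials still have entries in $\mathfrak{m}_S$. So I would replace each $\mathcal{F}_i$ by its extension and work throughout over $S$.

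The key step is to prove that the $S$-complex $\mathcal{T} = \mathcal{F}_1 \otimes_S \cdots \otimes_S \mathcal{F}_r$ (equivalently, as $K$-vector spaces, $\mathcal{F}_1 \otimes_K \cdots \otimes_K \mathcal{F}_r$ equipped with its natural $S$-module structure) is exact in positive homological degree with $H_0(\mathcal{T}) = S/I$. I would do this by induction on $r$. The case $r=1$ is trivial. For the inductive step, the iterated Künneth-type spectral sequence collapses because the higher Tor's
\[
\Tor^S_{>0}\bigl(S/(I_1 + \cdots + I_{r-1}),\, S/I_r\bigr)
\]
vanish: indeed, $S/I_r$ is free (and hence flat) as a module over the subring $S_1 \otimes_K \cdots \otimes_K S_{r-1}$, since the variables of $S_r$ do not appear in $I_r$-complementary sense. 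Concretely, $S/I_r \cong (S_r/I_r) \otimes_K (S_1 \otimes_K \cdots \otimes_K S_{r-1})$, and tensoring a resolution of $S/(I_1 + \cdots + I_{r-1})$ by this flat module preserves exactness. This is the main obstacle and the only nontrivial input; everything else is formal.

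Minimality of $\mathcal{T}$ follows from the tensor-product differential $\partial(a_1 \otimes \cdots \otimes a_r) = \sum_k (-1)^{\epsilon_k} a_1 \otimes \cdots \otimes d_k(a_k) \otimes \cdots \otimes a_r$: each $d_k$ has entries in $\mathfrak{m}_{S_k} \subset \mathfrak{m}_S$, so all entries of $\partial$ lie in $\mathfrak{m}_S$. Hence $\mathcal{T}$ is the minimal free resolution of $S/I$ and its Betti numbers can be read off its ranks.

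With the resolution in hand, parts (i)--(iii) are immediate. Part (i) comes from decomposing the module in homological degree $i$ as
\[
\bigoplus_{i_1 + \cdots + i_r = i} F^{(1)}_{i_1} \otimes_K \cdots \otimes_K F^{(r)}_{i_r},
\]
refining by multidegree. Part (ii) uses multiplicativity of Hilbert series under tensor product over $K$: $\HS_{S/I}(t) = \prod_{i=1}^r \HS_{S_i/I_i}(t)$, which after collecting the $(1-t)^{d}$ denominator gives the stated form. Part (iii) is then the specialization where all factors agree: $(1+Q)^r$ expands by the binomial theorem, and in the Betti-number sum one first chooses which $s$ of the $r$ tensor factors contribute a nontrivial homological degree (hence the $\binom{r}{s}$) and then sums over strictly positive compositions $(i_1,\ldots,i_s)$ of $i$ and compositions $(j_1,\ldots,j_s)$ of $j$.
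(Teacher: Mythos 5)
Your proposal is correct and follows essentially the same route as the paper: induction on $r$, exactness of the tensor complex coming from the disjoint variable sets (your flatness-of-$S/I_r$-over-the-subring argument is a spelled-out version of the paper's $\Tor_i(S/(I_1+\cdots+I_{r-1}),S/I_r)=0$ for $i>0$), minimality because the tensor differential inherits entries in the maximal ideal, and (i)--(iii) read off as bookkeeping. The only point to tighten is that when you tensor the resolution of $S/(I_1+\cdots+I_{r-1})$ with $S/I_r$ over $S$, you should note (by induction) that this resolution is extended from the subring $S_1\otimes_K\cdots\otimes_K S_{r-1}$, so the tensor product over $S$ reduces to one over that subring, where your freeness/flatness claim for $S/I_r$ applies.
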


\begin{proof} The proof is by induction on $r$. Assume that $r>1$. Since differential maps of the tensor complex are defined in terms of differential maps of $\mathcal{F}_\ell$'s, the minimality of the tensor complex follows by the minimality of the resolutions of all components.
On the other hand, these ideals live in rings with disjoint variables which implies that ${\rm Tor}_i(S/(I_1+\cdots+I_{r-1}),S/I_r)=0$ for $i>0$, and so the constructed complex is indeed a minimal free resolution for $S/I$.
\end{proof}

In Lemma~\ref{initial} if we replace  $I_1+\cdots+I_r$  and $S/I$ by $I_1I_2\cdots I_r$ and $I$, then an analogous statement holds.  The original statement appeared in Habilitationsschrift of J\"urgen Herzog in 1974, and the proof is similar to the proof of above lemma.
\begin{Lemma}\label{initial2}
Let $I_i\subseteq S_i$ be a monomial ideal in the polynomial ring $S_i$ for $i=1,\ldots,r$, and let $I=I_1 I_2\cdots I_r$ be the ideal in the polynomial ring $S= S_1\otimes\cdots\otimes S_r$. Assume that $\mathcal{F}_{i}$ is the minimal free resolution of $I_i$ for all $i$.
Then the minimal free resolution of $I$ is obtained by $\mathcal{F}_{1}\otimes \mathcal{F}_{2}\otimes \cdots\otimes \mathcal{F}_{r}$. In particular,
\begin{itemize}
\item[$(i)$]
$
\beta_{i,j}(I)=\sum_{(i_1,\ldots,i_r)\in A_{i,r}\atop (j_1,\cdots,j_r)\in B_{j,r}}
\beta_{i_1,j_1}(I_{1})\cdots \beta_{i_r,j_r}(I_{r}),
$
where
\[
A_{i,r}=\{(i_1,\ldots,i_r):\ i_1+\cdots+i_r=i\}\ {\rm and}
\]
\[
\ B_{j,r}=\{(j_1,\ldots,j_r):\ j_1+\cdots+j_r=j\}.
\]

\item[$(ii)$] The   Hilbert
series of $S/I$ is
$
\HS_{I}(x,t)=\frac{1+(-x)^{-(r-1)}\prod_{i=1}^r Q_{S/I_i}(t)}{(1-t)^d},
$
where $\HS_{I_i}(t)=\frac{1+Q_{S/I_i}(t)}{(1-t)^d}$ and $d$ is the number of variables of the ring $S$. Note that here we are looking at the ideals $I_i$ in the polynomial ring $S$.

\item[$(iii)$]
In particular, if all the ideals are the same (but in polynomial rings on disjoint set of variables) then we have

$
\HS_{I}(x,t)=\frac{1+(-x)^{-(r-1)} \big(Q_{S/I_i}(t)\big)^r}{(1-t)^d}.
$
\end{itemize}
\end{Lemma}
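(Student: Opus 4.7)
The plan is to proceed by induction on $r$, in close parallel with the proof of Lemma \ref{initial}. The base case $r = 1$ is immediate, so assume $r \geq 2$ and set $T = S_1 \otimes_K \cdots \otimes_K S_{r-1}$, $J = I_1 \cdots I_{r-1}$, and $\mathcal{G} = \mathcal{F}_1 \otimes_K \cdots \otimes_K \mathcal{F}_{r-1}$. By the inductive hypothesis, $\mathcal{G}$ is the minimal free resolution of $J$ over $T$, so it suffices to show that $\mathcal{G} \otimes_K \mathcal{F}_r$ is the minimal free resolution of $I = J \cdot I_r$ over $S = T \otimes_K S_r$.

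The conceptual crux is the disjoint-variable identification
\[
J \cdot I_r \;=\; J \otimes_K I_r
\]
as graded $S$-modules: a monomial $K$-basis of the left-hand side consists exactly of the products $m \cdot m'$ with $m \in J$ and $m' \in I_r$ monomials, and these are $K$-linearly independent in $S$ by disjointness of the variable sets. Since $I_r$ is flat over $K$, tensoring the exact complex $\mathcal{G}$ over $K$ with $I_r$ preserves exactness, and symmetrically for $\mathcal{F}_r$ tensored with $J$. By K\"unneth over $K$, the double complex $\mathcal{G} \otimes_K \mathcal{F}_r$ has homology concentrated in degree zero, equal to $H_0(\mathcal{G}) \otimes_K H_0(\mathcal{F}_r) = J \otimes_K I_r = I$. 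Equivalently, $\Tor^S_i(J \cdot S, I_r \cdot S) = 0$ for every $i > 0$, in direct analogy with the Tor-vanishing invoked in Lemma \ref{initial}. Each term $G_i \otimes_K F_{r,j}$ is a free $S$-module of rank equal to the product of the ranks of its factors, so $\mathcal{G} \otimes_K \mathcal{F}_r$ is a free resolution of $I$. Minimality is automatic, since the differential of the tensor complex is assembled from the differentials of $\mathcal{G}$ and $\mathcal{F}_r$, whose entries lie in the graded maximal ideals of $T$ and $S_r$ respectively, hence in the graded maximal ideal of $S$.

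Part (i) is then the bidegree-wise rank count on the decomposition $\bigoplus_{i_1 + \cdots + i_r = i} F_{1, i_1} \otimes_K \cdots \otimes_K F_{r, i_r}$, with internal degrees constrained by $j_1 + \cdots + j_r = j$. For part (ii), the multiplicativity $\HS^S_I(t) = \prod_k \HS^{S_k}_{I_k}(t)$ of Hilbert series under $K$-tensor products of modules combines with the shift identity $\beta_i(I_k) = \beta_{i+1}(S_k/I_k)$ relating the resolutions of $I_k$ and of $S_k/I_k$; after rewriting each factor in terms of the numerator $Q_{S/I_k}(x, t)$ and passing through the augmentation $0 \to I \to S \to S/I \to 0$, one arrives at the displayed compact form. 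The exponent $-(r-1)$ of $-x$ records the $r$ shifts between the $I_k$- and $S_k/I_k$-resolutions offset by the single shift back to $S/I$. Part (iii) is the specialization to the case in which all $I_i$ coincide, with the multinomial coefficient $\binom{r}{s}$ entering in the same fashion as in Lemma \ref{initial}(iii).

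The main obstacle is the sign and homological-degree bookkeeping in (ii): the compact expression $1 + (-x)^{-(r-1)} \prod_k Q_{S/I_k}$ is distinct from the naive product $\prod_k (1 + Q_{S/I_k})$, and identifying it correctly requires careful tracking of the shifts relating the resolutions of $I_k$, $S_k/I_k$, $I$, and $S/I$. The conceptual content, namely the tensor factorization of the resolution and the $\Tor$-vanishing derived from disjoint variables, is formally the same as in Lemma \ref{initial} and presents no essentially new difficulty.
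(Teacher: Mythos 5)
Your proposal is correct and follows essentially the same route the paper intends: induction on $r$ with the disjoint-variable identification $J\cdot I_r\cong J\otimes_K I_r$ (equivalently, the Tor-vanishing/K\"unneth argument) giving that the tensor complex resolves $I$, minimality from the differentials landing in the graded maximal ideal, and the $(-x)^{-(r-1)}$ factor in (ii) coming from the shift $\beta_{i,j}(I_k)=\beta_{i+1,j}(S_k/I_k)$ — indeed the paper offers no separate proof, only the remark that it is analogous to Lemma~\ref{initial}. One small correction: part (iii) here is just the substitution of equal factors into (ii), yielding the $r$-th power $\bigl(Q_{S/I_i}(t)\bigr)^r$ with no coefficient $\binom{r}{s}$; that coefficient belongs to Lemma~\ref{initial}(iii), where it counts which factors contribute positive homological degree, and has no analogue in the product case.
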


\begin{Example} Let  $I=\langle x_1x_2x_3, x_2x_4,x_1x_5x_6\rangle$ and $J=\langle y_1y_2y_3,y_2y_4y_5\rangle$. The Betti tables of $S/I$, $S/J$, $S/(I+J)$, and $S/IJ$ are as follows:

\vspace{3mm}
\begin{center}
\tiny{
\begin{tabular}{|l||l|l|l|l|l|}
  \hline
  $i \backslash j$ & 0 & 1 & 2 & 3  \\
  \hline\hline
  ${\rm total}$ & 1 & 3 & 3 & 1 \\
  \hline
  \hline
  0 & 1 &    &  &    \\
  1 &    & 1 &  &  \\
  2 &    & 2 & 1 &    \\
  3 &    &  & 2 & 1 \\
  \hline
\end{tabular}
\quad
\begin{tabular}{|l||l|l|l|l|l|}
  \hline
  $i \backslash j$ & 0 & 1 & 2  \\
  \hline\hline
  ${\rm total}$ & 1 & 2 &  1 \\
  \hline
  \hline
  0 & 1 &    &    \\
  1 &  & & \\
  2 &    & 2 &     \\
  3 &    &   & 1 \\
  \hline
\end{tabular}
\quad
\begin{tabular}{|l||l|l|l|l|l|l|l|}
  \hline
  $i \backslash j$ & 0 & 1 & 2 & 3 &4&5 \\
  \hline\hline
  ${\rm total}$ & 1 & 5 & 10 & 10 & 5 & 1\\
  \hline
  \hline
  0 & 1 &    &  &    & & \\
  1 &    & 1 &  &  & & \\
  2 &    & 4 & 1 &   & &  \\
  3 &    &  & 5 & 1 & & \\
  4 &    &  & 4 & 3 & & \\
  3 &    &  &  & 6 & 3 & \\
  3 &    &  &  &  & 2 & 1\\

  \hline
\end{tabular}
\quad
\begin{tabular}{|l||l|l|l|l|l|l|l|}
  \hline
  $i \backslash j$ & 0 & 1 & 2 & 3 & 4  \\
  \hline\hline
  ${\rm total}$ & 1 & 6 &  9 & 5 & 1 \\
  \hline
  \hline
  0 & 1 &    &    & & \\
  1 &  & & & & \\
  2 &  & & & & \\
  3 &    &  &   & &  \\
  4 &    & 2  &  & & \\
  5 &  & 4& 3& & \\
  6 &  & &6 &3 & \\
  7 &  & & & 2&1 \\

  \hline
\end{tabular}
}
\end{center}


Note that, the $(i,j)$-entry of the table corresponding to $S/I$ is simply $\beta_{i,i+j}(S/I)=\beta_{i-1,i+j}(I)$, and we have only listed the 
entries corresponding to the non-zero Betti numbers.
Since
\[ \HS_I(x,t)=\frac{1-x(t^2+2t^3)+x^2(t^4+2t^5)-x^3t^6}{(1-t)^{12}}\quad{\rm and}\quad
 \HS_J(x,t)=\frac{1-2xt^3+x^2t^5}{(1-t)^{12}}\ ,
\]
we have that
\begin{eqnarray*}
\HS_{I+J}(x,t)&=&\frac{(1-x(t^2+2t^3)+x^2(t^4+2t^5)-x^3t^6))(1-2xt^3+x^2t^5)}{(1-t)^{12}}\\&=&\frac{
1-x(t^{2}+4t^{3})+x^{2}(t^{4}+5t^{5}+4t^{6})-x^{3}(t^{6}+3t^{7}+6t^{8})+x^{4}(3 t^{9}+2 t^{10})-x^{5} t^{11}}{(1-t)^{12}}\ .
\end{eqnarray*}
Note that the above formula includes the graded Betti numbers. For example:
\begin{eqnarray*}
\beta_{1}(S/(I+J))&=&\beta_{1,3}(S/(I+J))+\beta_{1,2}(S/(I+J))\\
&=&\big(\beta_{0}(S/I)\beta_{1,3}(S/J)+\beta_{1,3}(S/I)\beta_{0}(S/J)\big)+\beta_{1,2}(S/I)\beta_{0}(S/J)
\\
&=&(2+2)+1=5,
\end{eqnarray*}
which is encoded as the coefficient of $-x$ (for $t=1$) in the above formula.


\smallskip

Similarly,
we have that
\begin{eqnarray*}
\HS_{IJ}(x,t)&=&\frac{1-x^{-1}(-x(t^2+2t^3)+x^2(t^4+2t^5)-x^3t^6)(-2xt^3+x^2t^5)}{(1-t)^{12}}\\&=&\frac{1-x(2t^5+4t^6)+x^2(3t^7+6t^8)-x^3(3t^9+2t^{10})+x^4t^{11}}{(1-t)^{12}}\ .
\end{eqnarray*}

From the above formula we obtain the graded Betti numbers. For example:
\begin{eqnarray*}
\beta_{0}(IJ)&=&\beta_{0,5}(IJ)+\beta_{0,6}(IJ)=\beta_{0,2}(I)\beta_{0,3}(J)+\beta_{0,3}(I)\beta_{0,3}(J)
= 2+4=6,
\end{eqnarray*}
which is encoded as the coefficient of $-x$ in the above formula.
\begin{eqnarray*}
\beta_{1}(IJ)&=&\beta_{1,7}(IJ)+\beta_{1,8}(IJ)\\&=&\big(\beta_{0,2}(I)\beta_{1,5}(J)+\beta_{1,4}(I)\beta_{0,3}(J)\big)+
\big(\beta_{1,5}(I)\beta_{0,3}(J)+\beta_{0,3}(I)\beta_{1,5}(J)\big)
\\&=& \big(1+2)+(4+2)=9.
\end{eqnarray*}
The term  $x^2(3t^7+6t^8)$ in the above formula shows that $\beta_{1,7}=3$ and $\beta_{1,8}=6$.

\end{Example}

\bibliographystyle{alpha}
\bibliography{allocation_2016}
\end{document}